\DeclareMathAlphabet{\mathpzc}{OT1}{pzc}{m}{it}
\font\sss=cmss8
\def\BV{{\mathbb V}}
\def\BX{{\mathbb X}}
\def\BY{{\mathbb Y}}
\def\BZ{{\mathbb Z}}
\def\sA{\mbox{\sf A}}
\def\sD{\mbox{\sf D}}
\def\sT{\mbox{\sf T}}
\def\sU{\mbox{\sf U}}
\def\sV{\mbox{\sf V}}
\def\ssA{\mbox{\sss A}}
\def\ssT{\mbox{\sss T}}
\def\add{\operatorname{add}}
\def\adots{\mathinner{\mkern1mu\raise1.0pt\vbox{\kern7.0pt\hbox{.}}\mkern2mu\raise4.0pt\hbox{.}\mkern2mu\raise7.0pt\hbox{.}\mkern1mu}}
\def\D{\mbox{\sD}}
\def\dim{\operatorname{dim}}
\def\End{\operatorname{End}}
\def\Hom{\operatorname{Hom}}
\def\Irr{\operatorname{Irr}}
\def\modulo{\operatorname{mod}}
\def\prod{\operatorname{prod}}
\def\rad{\operatorname{rad}}
\numberwithin{equation}{part}
\newtheorem{Lemma}{Lemma}[section]
\newtheorem{Theorem}[Lemma]{Theorem}
\newtheorem{Proposition}[Lemma]{Proposition}
\newtheorem{Corollary}[Lemma]{Corollary}
\theoremstyle{definition}
\newtheorem{Setup}[Lemma]{Setup}
\newtheorem{Remark}[Lemma]{Remark}
\begin{document}

\setlength{\parindent}{0pt}
\setlength{\parskip}{7pt}

\title[Cluster quotients]
{Quotients of cluster categories}

\author{Peter J\o rgensen}
\address{School of Mathematics and Statistics,
Newcastle University, Newcastle upon Tyne NE1 7RU,
United Kingdom}
\email{peter.jorgensen@ncl.ac.uk}
\urladdr{http://www.staff.ncl.ac.uk/peter.jorgensen}



\keywords{Auslander-Reiten quivers, Auslander-Reiten triangles,
Dynkin quivers, finite representation type, orbit categories,
triangulated categories}

\subjclass[2000]{16G10, 16G70, 18E30, 18E35}

\begin{abstract} 
  
Higher cluster categories were recently introduced as a
generalization of cluster categories.

\medskip
\noindent
This paper shows that in Dynkin types $A$ and $D$, half of all higher
cluster categories are actually just quotients of cluster categories.
The other half can be obtained as quotients of $2$-cluster categories,
the ``lowest'' type of higher cluster categories.

\medskip
\noindent
Hence, in Dynkin types $A$ and $D$, all higher cluster phenomena are
implicit in cluster categories and $2$-cluster categories.
In contrast, the same is not true in Dynkin type $E$.

\end{abstract}

\maketitle

\setcounter{section}{-1}
\section{Introduction}
\label{sec:introduction}

This paper is about the connection between quotient categories and
cluster categories, so let me start by explaining these two notions.

{\bf Quotient categories} come in a number of different flavours.  The
one to be considered here is probably the most basic: Let $\sA$ be an
additive category with a class of objects $\BY$.  For objects $A$ and
$B$ of $\sA$, denote by $\BY(A,B)$ all the morphisms from $A$ to $B$
which factor through an object of $\BY$.  Then the quotient category
$\sA_{\BY}$ has the same objects as $\sA$, and its morphism spaces are
defined by
\[
  \sA_{\BY}(A,B) = \sA(A,B)/\BY(A,B).
\]

{\bf Cluster categories} and the more general $u$-cluster categories,
pa\-ra\-me\-tri\-zed by the positive integer $u$, were introduced in
\cite{BMRRT}, \cite{CCS2}, \cite{Keller}, \cite{Thomas}, and
\cite{BinZhu}.  They underlie a representation theoretical viewpoint
on the theory of cluster algebras introduced and developed in
\cite{ClustAlgIII}, \cite{ClustAlgI}, \cite{ClustAlgII}, and
\cite{ClustAlgIV}.  Cluster categories have generated a strong
activity in recent years, not least because of their connection to
finite dimensional algebras and tilting theory, see \cite{Amiot},
\cite{ABST_cluster}, \cite{ABST_mcluster}, \cite{BaurMarsh1},
\cite{BaurMarsh2}, \cite{BMRRT}, \cite{BMR1}, \cite{BMR2},
\cite{BMR3}, \cite{BMRT}, \cite{CC}, \cite{CCS1}, \cite{CCS2},
\cite{CK1}, \cite{CK2}, \cite{GLS}, \cite{GLS2},
\cite{HolmJorgensenA}, \cite{HolmJorgensenD}, \cite{IyamaYoshino},
\cite{Keller}, \cite{KellerReiten}, \cite{KellerReiten2},
\cite{KoenigZhu}, \cite{Tabuada}, \cite{Thomas}, and \cite{BinZhu}.
One particular aspect of this connection is the importance of quivers
in both subjects, and in particular of Dynkin quivers of types $A$,
$D$, and $E$.

{\bf The present paper} shows that quotient categories permit a bridge
between cluster categories and $u$-cluster categories.  Specifically,
it will be proved that in Dynkin types $A$ and $D$, half of all
$u$-cluster categories are actually just quotients of cluster
categories, in a sense of the word ``half'' which will be made precise.

In the language of $u$-cluster categories, a cluster category is the
same thing as a $1$-cluster category, so an equivalent way to
formulate the statement is to say that, in types $A$ and $D$, half of
all $u$-cluster ca\-te\-go\-ri\-es are quotients of $1$-cluster
categories.  This will be complemented by a proof that the other half
of the $u$-cluster categories are quotients of $2$-cluster categories.
These statements are shown in Corollaries \ref{cor:A} and \ref{cor:D}.
In contrast, the corresponding result in type $E$ is not true, see
Remark \ref{rmk:E}.

The results will be proved in the following strong sense: It will be
established that the quotient categories in question are triangulated
categories which are equivalent {\em as triangulated categories} to
the relevant $u$-cluster categories.

As a backdrop to this, I will show more generally, under some
technical assumptions, that if $\sT$ is a triangulated category with a
class of objects $\BX$, then the quotient category $\sT_{\BX}$ is
triangulated if and only if $\BX$ is equal to its image under the
Auslander-Reiten translation of $\sT$ (Theorem
\ref{thm:TX_triangulated}).  When this is the case, I will prove that
as a translation quiver, the Auslander-Reiten quiver of $\sT_{\BX}$
can be obtained from the Auslander-Reiten quiver of $\sT$ by deleting
the vertices corresponding to objects in $\BX$ (Theorem
\ref{thm:AR}).  These results permit the deletion of orbits of the
Auslander-Reiten translation from the Auslander-Reiten quiver of a
triangulated category, without destroying the property of being
triangulated. 

Note that, while it is classical that Verdier quotients of
triangulated categories are triangulated, it seems to be less well
known that the present, simpler type of quotient categories sometimes
have a triangulated structure.  Some references do exist, see
\cite[sec.\ 7]{Beligiannis} and \cite[sec.  4]{IyamaYoshino}, but no
systematic exploration seems to have taken place.

The paper is organized as follows: Sections \ref{sec:pretriangulated}
and \ref{sec:triangulated} develop the theory of triangulated
quotients of triangulated categories.  Section \ref{sec:AR}
con\-si\-ders the Auslander-Reiten theory of triangulated quotients.
And Section \ref{sec:clusters} applies the theory to $u$-cluster
categories.

{\bf Background material. } Let me round off the introduction with
some facts about Krull-Schmidt categories and their quotients.

Let $k$ be an algebraically closed field and let $\sA$ be a
$k$-linear category which is Krull-Schmidt, that is, each object of
$\sA$ is a sum of indecomposable objects which are unique up to
isomorphism.

The radical $\rad$ of $\sA$ determines the subquotient
\[
  \Irr(M,N) = \rad(M,N)/\rad^2(M,N),
\]
see \cite[p.\ 178 and p.\ 228]{ARS}.  The Auslander-Reiten (AR) quiver
of $\sA$ has one vertex for each isomorphism class of indecomposable
objects and $\dim_k \Irr(M,N)$ arrows from the vertex of $M$ to the
vertex of $N$.

Now let $\BY$ be a class of objects of $\sA$ closed under
isomorphisms, direct sums, and direct summands.  Then the quotient
category $\sA_{\BY}$ makes sense, and the useful content of the
following lemma is folklore.
\begin{Lemma}
\label{lem:indecomposables_in_factor}

\begin{enumerate}
  
  \item  If $A \cong B$ in $\sA_{\BY}$, then there exist objects $X,Y$ in
  $\BY$ such that in $\sA$, the object $A$ is isomorphic to a direct
  summand of $B \oplus Y$ and the object $B$ is isomorphic to a direct
  summand of $A \oplus X$.

\smallskip

  \item  The category $\sA_{\BY}$ is Krull-Schmidt and the isomorphism
  classes of indecomposable objects in $\sA_{\BY}$ correspond to the
  isomorphism classes of indecomposable objects in $\sA$ which are not
  in $\BY$.

\smallskip

  \item  The AR quiver of $\sA_{\BY}$ is obtained from the AR quiver of
  $\sA$ by deleting the vertices corresponding to objects of $\BY$
  along with the arrows into or out of such vertices.

\end{enumerate}
\end{Lemma}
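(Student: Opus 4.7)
I would prove the three parts in order, with (ii) leveraging (i) and (iii) leveraging (ii).

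For (i), I use the standard splitting trick. Lift the mutually inverse isomorphisms between $A$ and $B$ in $\sA_{\BY}$ to morphisms $f \colon A \to B$ and $g \colon B \to A$ in $\sA$, and factor $\id_A - gf$ as $A \xrightarrow{\alpha} Y \xrightarrow{\beta} A$ with $Y \in \BY$. Then the map $A \to B \oplus Y$ with components $f$ and $\alpha$ has the map $B \oplus Y \to A$ with components $g$ and $\beta$ as a left inverse in $\sA$, exhibiting $A$ as a summand of $B \oplus Y$; the claim about $B$ is symmetric.

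For (ii), I first verify that each indecomposable $M$ of $\sA$ with $M \notin \BY$ remains nonzero and indecomposable in $\sA_{\BY}$. Non-vanishing uses (i): if $M \cong 0$ in $\sA_{\BY}$, then $M$ is an $\sA$-summand of some object of $\BY$, forcing $M \in \BY$. Indecomposability follows because $\End_{\sA_{\BY}}(M) = \End_{\sA}(M)/\BY(M,M)$ is a nonzero quotient of the local ring $\End_{\sA}(M)$, hence itself local. Every object of $\sA_{\BY}$ admits a finite decomposition into such indecomposables via its Krull-Schmidt decomposition in $\sA$, and uniqueness follows from (i): if $\bigoplus_i A_i \cong \bigoplus_j B_j$ in $\sA_{\BY}$ with all $A_i, B_j$ indecomposable and not in $\BY$, then $\bigoplus_i A_i$ is an $\sA$-summand of $(\bigoplus_j B_j) \oplus Y$, and since no summand of $Y$ can be isomorphic to any $A_i$ (by closure of $\BY$ under summands), the multiplicities of the $A_i$ are bounded by those of the $B_j$; the symmetric bound gives equality.

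For (iii), (ii) already identifies the vertex sets, so it suffices to prove $\Irr_{\sA_{\BY}}(M,N) \cong \Irr_{\sA}(M,N)$ for indecomposable $M, N \notin \BY$. This reduces to the identities $\rad_{\sA_{\BY}}(M,N) = \rad_{\sA}(M,N)/\BY(M,N)$ and $\rad^2_{\sA_{\BY}}(M,N) = \rad^2_{\sA}(M,N)/\BY(M,N)$. The first uses that any map factoring through $\BY$ is a non-isomorphism (else some $\BY$-object would have $M$ or $N$ as a summand), together with the fact that the radical of a local ring passes to its nonzero quotients. The key step, and the one I expect to be the main obstacle, is the inclusion $\BY(M,N) \subseteq \rad^2_{\sA}(M,N)$, needed so that $\rad/\rad^2$ descends cleanly to the quotient. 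To obtain it, decompose $Y \in \BY$ into indecomposable summands $Y_k \in \BY$ and note that each $Y_k$ is non-isomorphic to both $M$ and $N$, so every factor $M \to Y_k \to N$ lies in $\rad_{\sA} \circ \rad_{\sA}$. Given this, $\rad^2_{\sA_{\BY}}(M,N) = \rad^2_{\sA}(M,N)/\BY(M,N)$ follows by taking a representative $\sum g_i f_i$ with composites through indecomposables $L_i$ and splitting according to whether $L_i \in \BY$ (in which case the summand dies in $\sA_{\BY}$) or $L_i \notin \BY$ (in which case it stays in $\rad^2_{\sA_{\BY}}$).
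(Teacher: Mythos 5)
Your proposal is correct and follows essentially the same route the paper indicates: (i) by the splitting trick, (ii) deduced from (i) via local endomorphism rings, and (iii) via the identification $\rad^n_{\ssA_{\BY}}(M,N) \cong \rad^n_{\ssA}(M,N)/\BY(M,N)$, whose crux — that $\BY(M,N) \subseteq \rad^2_{\ssA}(M,N)$ because indecomposable summands of objects of $\BY$ are not isomorphic to $M$ or $N$ — you identify and prove correctly.
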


(i) and (ii) are straightforward, with (ii) following from (i).

(iii) is obtained by combining (ii) and the following: If $M$ and $N$
are indecomposable objects of $\sA$ which are not in $\BY$, then
$\rad_{\ssA_{\BY}}^n(M,N) \cong \rad_{\ssA}^n(M,N)/\BY(M,N)$ whence
\[
  \Irr_{\ssA_{\BY}}(M,N) \cong \Irr_{\ssA}(M,N).
\]

\section{Pretriangulated quotient categories}
\label{sec:pretriangulated}

As a stepping stone towards triangulated quotient categories, this
section shows how to equip quotient categories with a pretriangulated
structure.

Recall from \cite[sec.\ II.1]{BeligiannisReiten} the notion of a
pretriangulated category.  This is an additive category equipped with
some data: There is an endofunctor $\sigma$ and a class of diagrams of
the form $A \rightarrow B \rightarrow C \rightarrow \sigma A$ called
distinguished right-triangles.  There is also an endofunctor $\omega$
and a class of diagrams of the form $\omega Z \rightarrow X
\rightarrow Y \rightarrow Z$ called distinguished left-triangles.  The
distinguished right-triangles satisfy the axioms of a triangulated
category, except that $\sigma$ is not required to be an equivalence of
categories.  Similarly for the distinguished left-triangles.  Finally,
there are some compatibility conditions, most importantly that
$(\sigma,\omega)$ is an adjoint pair of functors.

\begin{Setup}
\label{set:preliminary}
Let $\sT$ be a triangulated category with suspension functor $\Sigma$.
Let $\BX$ be a class of objects of $\sT$, closed under isomorphisms,
which is both preenveloping and precovering.
\end{Setup}

Recall that for $\BX$ to be preenveloping means that each object $M$
has an $\BX$-pre\-en\-ve\-lo\-pe, that is a morphism $M \rightarrow
X_M$ with $X_M$ in $\BX$ such that each morphism $M \rightarrow X$
with $X$ in $\BX$ factors through $M \rightarrow X_M$,
\begin{equation}
\label{equ:amper}
\vcenter{
  \xymatrix{
    M \ar[r] \ar[dr] & X_M \ar@{.>}[d] \\
    & X \lefteqn{.} \\
           }
        }
\end{equation}
Dually, for $\BX$ to be precovering means that each object $M$ has an
$\BX$-precover $X^M \rightarrow M$,
\[
  \xymatrix{
    X \ar[rd] \ar@{.>}[d] & \\
    X^M \ar[r] & M \lefteqn{.} \\
           }
\]

Under Setup \ref{set:preliminary}, the quotient category $\sT_{\BX}$
can be turned into a pretriangulated category as follows:

First, to get the endofunctors $\sigma$ and $\omega$, pick, for each
$M$ in $\sT$, an $\BX$-preenvelope $M \rightarrow X_M$ and an
$\BX$-precover $X^M \rightarrow M$.  Complete to distinguished
triangles in $\sT$,
\begin{equation}
\label{equ:defsigma}
  M \rightarrow X_M \rightarrow \sigma M \rightarrow \Sigma M
\end{equation}
and
\begin{equation}
\label{equ:defomega}
  \Sigma^{-1}M \rightarrow \omega M \rightarrow X^M \rightarrow M.
\end{equation}
This defines objects $\sigma M$ and $\omega M$.  To turn $\sigma$ and
$\omega$ into functors, note that if $M \stackrel{\mu}{\rightarrow} N$
is a morphism in $\sT$, then there is a commutative diagram
\begin{equation}
\label{equ:a}
  \vcenter{
  \xymatrix{
    M \ar[d]_{\mu} \ar[r] & X_M \ar[d]_{\xi} \ar[r] & \sigma M \ar[d]^s \ar[r]
      & \Sigma M \ar[d]^{\Sigma \mu} \\  
    N \ar[r] & X_N \ar[r] & \sigma N \ar[r] & \Sigma N \lefteqn{,}
           }
          }
\end{equation}
where $\xi$ exists because $X_N$ is in $\BX$ and $M \rightarrow X_M$
is an $\BX$-preenvelope, and $s$ exists by one of the axioms for the
triangulated category $\sT$.  Now denote by $\underline{\mu}$ and
$\underline{s}$ the morphisms in $\sT_{\BX}$ corresponding to $\mu$
and $s$ and set $\sigma(\underline{\mu}) = \underline{s}$.  This turns
$\sigma$ into an endofunctor of $\sT_{\BX}$, and the dual method works
for $\omega$.

Secondly, to get distinguished right- and left-triangles, let $M
\stackrel{\mu}{\rightarrow} N$ be an $\BX$-monomorphism in $\sT$, that
is, a morphism such that each morphism $M \rightarrow X$ with $X$ in
$\BX$ factors through $M \stackrel{\mu}{\rightarrow} N$,
\begin{equation}
\label{equ:X-monomorphism}
  \vcenter{
  \xymatrix{
    M \ar[r]^{\mu} \ar[dr] & N \ar@{.>}[d] \\
    & X \lefteqn{.} \\
           }
          }
\end{equation}
Then $\mu$ can be extended to a distinguished triangle $M
\stackrel{\mu}{\rightarrow} N \stackrel{\nu}{\rightarrow} P
\rightarrow \Sigma M$ which fits into a commutative diagram
\begin{equation}
\label{equ:b}
  \vcenter{
  \xymatrix{
    M \ar@{=}[d] \ar[r]^{\mu} & N \ar[d]_n \ar[r]^{\nu} & P \ar[d]^{\pi} \ar[r]
      & \Sigma M \ar@{=}[d] \\  
    M \ar[r] & X_M \ar[r] & \sigma M \ar[r] & \Sigma M \lefteqn{,}
           }
          }
\end{equation}
where $n$ exists because $X_M$ is in $\BX$ and $M
\stackrel{\mu}{\rightarrow} N$ is an $\BX$-monomorphism, and $\pi$
exists by one of the axioms for the triangulated category $\sT$.
Declare 
\[
  M \stackrel{\underline{\mu}}{\rightarrow}
  N \stackrel{\underline{\nu}}{\rightarrow}
  P \stackrel{\underline{\pi}}{\rightarrow} \sigma M
\]
to be a distinguished right-triangle in $\sT_{\BX}$.

Dually, let $M \rightarrow N$ be an $\BX$-epimorphism,
\begin{equation}
\label{equ:X-epimorphism}
  \vcenter{
  \xymatrix{
    X \ar[dr] \ar@{.>}[d] & \\
    M \ar[r] & N \lefteqn{.} \\
           }
          }
\end{equation}
Then $M \rightarrow N$ can be extended to a distinguished triangle,
and proceeding dually to the above construction gives the
distinguished left-triangles in $\sT_{\BX}$.

With this data, the following theorem holds.

\begin{Theorem}
The quotient category $\sT_{\BX}$ is pretriangulated.
\end{Theorem}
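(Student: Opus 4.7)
The plan is to verify, item by item, the definition of pretriangulated category in the sense of \cite[sec.\ II.1]{BeligiannisReiten}. There are three main things to establish: (a) that the assignments $M \mapsto \sigma M$, $M \mapsto \omega M$ extend to well-defined endofunctors of $\sT_{\BX}$; (b) that the distinguished right-triangles satisfy the right-triangulated axioms (closure under isomorphism, existence for every morphism, rotation, morphism extension, and the octahedral axiom), with the dual statements for distinguished left-triangles; and (c) that $(\sigma,\omega)$ is an adjoint pair on $\sT_{\BX}$. The unifying technique throughout is that a morphism in $\sT$ vanishes in $\sT_{\BX}$ precisely when it factors through an object of $\BX$, so every well-definedness claim boils down to locating the relevant ambiguity inside the $\BX$-factoring ideal.

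First I would show that $\sigma$ is well-defined on morphisms. In \eqref{equ:a} there are two sources of indeterminacy: the choice of $\xi$ (guaranteed because $X_N \in \BX$ and $M \to X_M$ is an $\BX$-preenvelope) and the choice of $s$ (guaranteed by the morphism axiom of the triangulated category $\sT$). For $\xi$, any two choices differ by a morphism killed after precomposition with $M \to X_M$, hence factoring as $X_M \to \sigma M \to X_N$; propagating through the right square of \eqref{equ:a}, this contributes to $s$ a term factoring through $X_N \in \BX$. For $s$ itself, the standard non-uniqueness in completing a morphism of triangles places the difference of two choices in the image of $\Hom(\sigma M, X_N) \to \Hom(\sigma M, \sigma N)$, again factoring through $X_N$. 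Thus $\underline{s}$ depends only on $\underline{\mu}$, and functoriality follows; the construction of $\omega$ is formally dual.

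Next I would verify the triangle axioms. The key observation is that every morphism $\underline{\mu} : M \to N$ in $\sT_{\BX}$ is represented in $\sT$ by an $\BX$-monomorphism, namely $(\mu, m) : M \to N \oplus X_M$ where $m : M \to X_M$ is the preenvelope: any $M \to X \in \BX$ factors through $m$, hence through $(\mu, m)$. Since $N \oplus X_M \cong N$ in $\sT_{\BX}$, this yields existence of distinguished right-triangles starting at $\underline\mu$. Rotation, morphism extension, and the octahedral axiom are then obtained by pulling back the corresponding constructions from the triangulated structure of $\sT$ and checking compatibility modulo $\BX$; the dual arguments handle distinguished left-triangles. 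For the adjointness $\Hom_{\sT_{\BX}}(\sigma M, N) \cong \Hom_{\sT_{\BX}}(M, \omega N)$, one applies $\Hom(-,N)$ to \eqref{equ:defsigma} and $\Hom(M,-)$ to \eqref{equ:defomega}; the terms $\Hom(X_M,N)$ and $\Hom(M,X^N)$ vanish in the quotient, and a direct comparison of the resulting subquotients produces the natural isomorphism.

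The main obstacle is the octahedral axiom: one must assemble three distinguished right-triangles in $\sT_{\BX}$, each arising from an $\BX$-monomorphism, and verify that all comparison morphisms and commutativity relations descend unambiguously to the quotient. This requires the same type of diagram chase as in the well-definedness step for $\sigma$, but iterated; no genuinely new ideas are needed, only careful bookkeeping to ensure that every completion produced by the triangulated structure of $\sT$ can be chosen compatibly modulo the $\BX$-factoring ideal.
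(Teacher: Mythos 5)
Your proposal is correct and is essentially the approach the paper takes: the paper's proof simply defers the verification to \cite[thm.\ 7.2]{Beligiannis} and the method of \cite[thm.\ 3.1]{BeligiannisMarmaridis}, which is exactly the axiom-by-axiom check you outline (well-definedness of $\sigma$ and $\omega$ modulo the ideal of morphisms factoring through objects of $\BX$, the right- and left-triangle axioms, and the adjunction). The only substantive content the paper adds is the octahedron-based description of the distinguished right-triangle attached to an arbitrary morphism $\mu$ (with a sign change on the last map), which plays the same role as your replacement of $\underline{\mu}$ by the $\BX$-monomorphism $(\mu, m) \colon M \to N \oplus X_M$.
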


\begin{proof}
Morally speaking, this is \cite[thm.\ 7.2]{Beligiannis}, which,
however, is stated with different assumptions.  The proof can be
carried through in the manner of the proof of \cite[thm.\
3.1]{BeligiannisMarmaridis}.

It is helpful to start by establishing that the following construction
is an alternative way of getting the distinguished right-triangles in
$\sT_{\BX}$: Given a morphism $M \stackrel{\mu}{\rightarrow} N$ in
$\sT$, consider the distinguished triangle $M \rightarrow X_M
\rightarrow \sigma M \rightarrow \Sigma M$ from equation
\eqref{equ:defsigma}.  The octahedral axiom for $\sT$ gives a way to
embed it into a commutative diagram where the rows are distinguished
triangles,
\[
  \xymatrix{
    M \ar[r] \ar[d]_{\mu} & X_M \ar[r] \ar[d] & \sigma M \ar[r] \ar@{=}[d] & \Sigma M \ar[d]^{\Sigma \mu} \\
    N \ar[r]_{\nu} & P \ar[r]_{\pi} & \sigma M \ar[r] & \Sigma N \lefteqn{.} \\
           }
\]
Up to isomorphism, the distinguished right-triangles in $\sT_{\BX}$
are now precisely the diagrams which can be obtained as
\[
  M \stackrel{\underline{\mu}}{\rightarrow}
  N \stackrel{\underline{\nu}}{\rightarrow}
  P \stackrel{-\underline{\pi}}{\rightarrow}
  \sigma M.
\]
Note the sign change on the last arrow.
\end{proof}

\begin{Remark}
Standard arguments show that none of the choices involved in
constructing the pretriangulated structure on $\sT_{\BX}$ make any
difference.

That is, choosing the distinguished triangles \eqref{equ:defsigma} and
\eqref{equ:defomega} or the morphisms in the diagrams \eqref{equ:a}
and \eqref{equ:b} differently gives an equivalent structure of
pretriangulated category.
\end{Remark}

\section{Triangulated quotient categories}
\label{sec:triangulated}

This section considers the pretriangulated quotient category
$\sT_{\BX}$ from Section \ref{sec:pretriangulated} and shows, under
some technical assumptions, that it is triangulated if and only if
$\BX$ is equal to its image under the Auslander-Reiten (AR)
translation of $\sT$.

\begin{Setup}
\label{set:standard}
Let $k$ be an algebraically closed field and let $\sT$ be a $k$-linear
triangulated ca\-te\-go\-ry with suspension functor $\Sigma$ and Serre
functor $S$, which has finite dimensional $\Hom$ spaces and split
idempotents.  Let $\BX$ be a class of objects of $\sT$, closed under
isomorphisms, direct sums, and direct summands, which is both
precovering and preenveloping.
\end{Setup}

The conditions imply that $\sT$ is Krull-Schmidt, see \cite[p.\ 
52]{RingTame}.  Recall that the Serre functor $S$ is an
autoequivalence of $\sT$ for which there are natural isomorphisms
\[
  \sT(A,B) \cong \sT(B,SA)^{\vee},
\]
where $(-)^{\vee} = \Hom_k(-,k)$.  The Serre functor $S$ determines
the AR translation $\tau = \Sigma^{-1} \circ S$.

For the following lemma, note that I will write ``$\tau \BX = \BX$''
as a shorthand for ``the set of objects isomorphic to objects in $\tau
\BX$ is equal to $\BX$''.  The notions of $\BX$-monomorphism and
$\BX$-epimorphism are described by diagrams \eqref{equ:X-monomorphism}
and \eqref{equ:X-epimorphism}.

\begin{Lemma}
\label{lem:mono_epi}
Suppose $\tau \BX = \BX$.  If $A \rightarrow B \rightarrow C
\rightarrow$ is a distinguished triangle in $\sT$, then $A \rightarrow
B$ is an $\BX$-monomorphism if and only if $B \rightarrow C$ is an
$\BX$-epimorphism.
\end{Lemma}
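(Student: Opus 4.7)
The plan is to convert each of the two conditions into the vanishing of a certain linear map between $\Hom$ spaces, and then to match those maps up via Serre duality together with the hypothesis $\tau \BX = \BX$.

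Write the distinguished triangle as $A \stackrel{a}{\to} B \stackrel{b}{\to} C \stackrel{c}{\to} \Sigma A$, and let $X$ range over $\BX$. First, I would rewrite what it means for $a$ to be an $\BX$-monomorphism: by definition, every map $A \to X$ factors through $a$, which is to say that $a^* : \sT(B,X) \to \sT(A,X)$ is surjective. Applying $\sT(-,X)$ to the (rotated) distinguished triangle $\Sigma^{-1}C \to A \to B \to C$ yields a long exact sequence, so this surjectivity is equivalent to the map
\[
c^*\colon \sT(A,X) \to \sT(\Sigma^{-1}C, X)
\]
(induced by the connecting morphism $\Sigma^{-1}c$) being zero. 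Dually, $b$ is an $\BX$-epimorphism iff $b_*\colon \sT(X,B) \to \sT(X,C)$ is surjective, which by the long exact sequence obtained from $\sT(X,-)$ amounts to
\[
c_*\colon \sT(X,C) \to \sT(X,\Sigma A)
\]
being zero.

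Next I would apply Serre duality naturally to the second map. The functorial isomorphism $\sT(X,-) \cong \sT(-,SX)^{\vee}$ identifies $c_*\colon \sT(X,C) \to \sT(X,\Sigma A)$ with the $k$-linear dual of
\[
\sT(\Sigma A, SX) \xrightarrow{\;-\circ c\;} \sT(C, SX).
\]
Vanishing of a map between finite dimensional vector spaces is equivalent to vanishing of its dual, so $b$ is an $\BX$-epimorphism iff the displayed map is zero. Shifting by $\Sigma^{-1}$ rewrites this as
\[
\sT(A, \Sigma^{-1}SX) \to \sT(\Sigma^{-1}C, \Sigma^{-1}SX),
\]
which is precisely the map $c^* \colon \sT(A, \tau X) \to \sT(\Sigma^{-1}C, \tau X)$ appearing in the first reformulation, but with $X$ replaced by $\tau X$.

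Finally, I would invoke the hypothesis $\tau \BX = \BX$: as $X$ ranges over $\BX$ (up to isomorphism), so does $\tau X$. Hence the two families of vanishing conditions, parametrized by $X \in \BX$, coincide, which yields the claimed equivalence. The only nontrivial point is bookkeeping with the Serre duality isomorphism and the shift — in particular making sure that the map induced by $c$ on the $\sT(X,-)$ side is dual to the map induced by $c$ on the $\sT(-,SX)$ side; this is naturality of Serre duality in both variables and should be straightforward once set up.
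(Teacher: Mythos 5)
Your proposal is correct and follows essentially the same route as the paper: both arguments rest on the long exact sequences induced by the triangle, Serre duality to flip variance, and the hypothesis $\tau\BX=\BX$ to identify the resulting conditions as $X$ ranges over $\BX$. The only cosmetic difference is that you reduce both the $\BX$-monomorphism and $\BX$-epimorphism conditions to the vanishing of the map induced by the connecting morphism, whereas the paper passes from surjectivity to injectivity along the sequence $\sT(X,B)\rightarrow\sT(X,C)\rightarrow\sT(X,\Sigma A)\rightarrow\sT(X,\Sigma B)$ and then dualizes directly to the surjectivity of $\sT(B,\tau X)\rightarrow\sT(A,\tau X)$.
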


\begin{proof}
Let $X$ run through $\BX$.  For each $X$, the distinguished triangle
induces a long exact sequence
\[
  \sT(X,B) \rightarrow \sT(X,C) \rightarrow \sT(X,\Sigma A)
  \rightarrow \sT(X,\Sigma B).
\]
For $B \rightarrow C$ to be an $\BX$-epimorphism is the same as for
the first arrow in the long exact sequence always to be surjective.
This is the same as for the second arrow always to be zero, which is
again the same as for the third arrow always to be injective.

Using Serre duality, the third arrow can be identified with
\[
  \sT(A,\Sigma^{-1}SX)^{\vee} \rightarrow \sT(B,\Sigma^{-1}SX)^{\vee}
\]
which is injective if and only if
\[
  \sT(B,\Sigma^{-1}SX) \rightarrow \sT(A,\Sigma^{-1}SX)
\]
is surjective.  But this last arrow is
\[
  \sT(B,\tau X) \rightarrow \sT(A,\tau X).
\]
For this always to be surjective is the same as for $A \rightarrow B$
to be a $(\tau\BX)$-monomorphism, that is, an $\BX$-monomorphism.
\end{proof}

The pretriangulated structure of $\sT_{\BX}$ is a triangulated
structure if and only if the functor $\sigma$ is an autoequivalence.

\begin{Theorem}
\label{thm:TX_triangulated}
The pretriangulated structure of $\sT_{\BX}$ is a triangulated
structure if and only if $\tau \BX = \BX$.
\end{Theorem}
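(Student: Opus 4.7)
Given the preceding remark that the pretriangulated structure on $\sT_{\BX}$ is a triangulated structure exactly when $\sigma$ is an autoequivalence, the plan is to prove that $\sigma$ is an autoequivalence if and only if $\tau\BX = \BX$.

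\emph{Sufficiency.} Assume $\tau\BX = \BX$. The aim is to exhibit $\omega$ as a quasi-inverse to $\sigma$. For $M \in \sT$, the defining triangle \eqref{equ:defsigma} begins with the $\BX$-preenvelope $M \to X_M$, which by \eqref{equ:X-monomorphism} is an $\BX$-monomorphism. Lemma~\ref{lem:mono_epi} then shows that $X_M \to \sigma M$ is an $\BX$-epimorphism, and since $X_M \in \BX$ this is the same as saying it is an $\BX$-precover of $\sigma M$. The rotated distinguished triangle
$$\Sigma^{-1}(\sigma M) \rightarrow M \rightarrow X_M \rightarrow \sigma M$$
therefore has exactly the form of the triangle \eqref{equ:defomega} defining $\omega(\sigma M)$. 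By the Remark stating that the pretriangulated structure does not depend on the choices of preenvelopes, precovers, and completions, this yields a natural isomorphism $\omega\sigma M \cong M$ in $\sT_{\BX}$. A symmetric argument starting from \eqref{equ:defomega} gives $\sigma\omega M \cong M$, so $\sigma$ is an autoequivalence with quasi-inverse $\omega$.

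\emph{Necessity.} Assume $\sigma$ is an autoequivalence, so $\sT_{\BX}$ is triangulated. The key claim is that for every $\BX$-monomorphism $A \to B$ in $\sT$ and every extension to a distinguished triangle $A \to B \to C \to \Sigma A$, the morphism $B \to C$ is an $\BX$-epimorphism. Granting this, the computation inside the proof of Lemma~\ref{lem:mono_epi}---which in fact establishes the unconditional equivalence ``$B\to C$ is $\BX$-epi iff $A\to B$ is a $(\tau\BX)$-monomorphism''---shows that every $\BX$-monomorphism in $\sT$ is automatically a $(\tau\BX)$-monomorphism. Now fix $Y \in \tau\BX$ and let $Y \to X_Y$ be its $\BX$-preenvelope, necessarily an $\BX$-monomorphism, and hence also a $(\tau\BX)$-monomorphism. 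Applying the factorization property to the identity morphism $Y \to Y$ (whose target lies in $\tau\BX$) writes $\id_Y$ as a composite $Y \to X_Y \to Y$, displaying $Y$ as a direct summand of $X_Y \in \BX$. Closure of $\BX$ under direct summands forces $Y \in \BX$, so $\tau\BX \subseteq \BX$; the reverse inclusion is obtained dually via $\omega$, $\BX$-precovers, and $\BX$-epimorphisms.

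The main obstacle is the key claim in the necessity direction. The heuristic is that, since $\sT_{\BX}$ is now triangulated, the class of distinguished right-triangles constructed from $\BX$-monomorphisms via \eqref{equ:b} must coincide, up to isomorphism in $\sT_{\BX}$, with the class of distinguished left-triangles constructed dually from $\BX$-epimorphisms. The technical task is to transport this identification from $\sT_{\BX}$ back to $\sT$ and conclude that the middle map $B \to C$ of a completion is itself an $\BX$-epimorphism in $\sT$. The Remark on independence of choices, together with Lemma~\ref{lem:indecomposables_in_factor}(i), which relates isomorphisms in $\sT_{\BX}$ to the structure in $\sT$, is the tool one expects to use to carry out this passage.
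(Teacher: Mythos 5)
Your sufficiency direction is correct and is essentially the paper's own argument: $M\to X_M$ is an $\BX$-monomorphism, Lemma \ref{lem:mono_epi} makes $X_M\to\sigma M$ an $\BX$-precover, and completing it recovers \eqref{equ:defsigma}, so $\omega\sigma M\cong M$ and dually $\sigma\omega M\cong M$. Your observation that the computation inside the proof of Lemma \ref{lem:mono_epi} unconditionally identifies ``$B\to C$ is an $\BX$-epimorphism'' with ``$A\to B$ is a $(\tau\BX)$-monomorphism'' is also correct, and the step from your key claim to $\tau\BX\subseteq\BX$ (factor $\id_Y$ through the $\BX$-preenvelope $Y\to X_Y$ and use closure under summands) is fine.

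The necessity direction, however, has a genuine gap: the entire argument rests on the key claim that, once $\sigma$ is an autoequivalence, every completion of an $\BX$-monomorphism $A\to B$ has $B\to C$ an $\BX$-epimorphism, and you never prove this -- you explicitly call it the main obstacle and offer only a heuristic. The difficulty is real: being an $\BX$-epimorphism is a factorization property in $\sT$ involving morphisms out of objects of $\BX$, and precisely those morphisms become zero in $\sT_{\BX}$, so an identification of right- and left-triangles in the quotient does not transport back to $\sT$ in any evident way; the only obvious justification of the claim is Lemma \ref{lem:mono_epi} applied after one already knows $\tau\BX=\BX$, which is circular. The paper handles exactly this descent problem by a different and more concrete device: for an indecomposable $X\in\BX$ with $\tau X\notin\BX$, the left almost split map $t$ in the AR triangle $\tau X\to Y\to X\to$ is an $\BX$-monomorphism, so in $\sT_{\BX}$ one gets a distinguished right-triangle isomorphic to $\tau X\to Y\to 0\to\sigma\tau X$, forcing $\underline{t}$ to be invertible; then Lemma \ref{lem:indecomposables_in_factor}(i), the Krull--Schmidt property, and the locality of $\End_{\ssT}(\tau X)$ lift this invertibility from the quotient to $\sT$, showing $t$ is split and contradicting that the triangle is an AR triangle. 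That lifting step (isomorphism in $\sT_{\BX}$ implies a split map in $\sT$) is the idea missing from your sketch; to salvage your route you would need an argument of comparable precision for the key claim, which does not appear to be any easier than the theorem itself.
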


\begin{proof}
First assume that $\tau \BX = \BX$; I must show that $\sigma$ is an
au\-to\-e\-qui\-va\-len\-ce. 

It follows directly from the definitions (diagrams \eqref{equ:amper}
and \eqref{equ:X-monomorphism}) that an $\BX$-preenvelope is simply an
$\BX$-monomorphism $M \rightarrow X$ with $X$ in $\BX$; similarly, an
$\BX$-precover is an $\BX$-epimorphism $X \rightarrow M$ with $X$ in
$\BX$.  Consider the distinguished triangle \eqref{equ:defsigma},
\begin{equation}
\label{equ:defsigma_again}
  M \rightarrow X_M \rightarrow \sigma M \rightarrow \Sigma M.
\end{equation}
The morphism $M \rightarrow X_M$ is an $\BX$-preenvelope and hence an
$\BX$-mo\-no\-mor\-phism, and so by Lemma \ref{lem:mono_epi} the
morphism $X_M \rightarrow \sigma M$ is an $\BX$-epimorphism and hence
an $\BX$-precover.  Now, $\omega \sigma M$ is computed by completing
such a precover to a distinguished triangle as in equation
\eqref{equ:defomega}, but completing $X_M \rightarrow \sigma M$ to a
distinguished triangle just recovers \eqref{equ:defsigma_again}, so
$\omega \sigma M \cong M$ in $\sT_{\BX}$.

This isomorphism is easily shown to be natural.  Similarly, there is a
natural isomorphism $\sigma \omega M \cong M$, and so $\sigma$ is an
autoequivalence whence $\sT_{\BX}$ is triangulated.

Next assume that $\sT_{\BX}$ is triangulated, that is, $\sigma$ (and
hence also $\omega$) is an autoequivalence; I must show $\tau \BX =
\BX$.  This amounts to seeing $\tau \BX \subseteq \BX$ and
$\tau^{-1}\BX \subseteq \BX$, and I will give the proof of the first
of these since the second one is dual.

Let $X$ be an indecomposable object in $\BX$ and consider the AR
triangle
\begin{equation}
\label{equ:c}
  \tau X \stackrel{t}{\rightarrow} Y \rightarrow X \rightarrow
\end{equation}
in $\sT$.  If $\tau X$ is in $\BX$ then I am done, so let me suppose
not.  When $\tau X$ is not in $\BX$, a morphism in $\sT$ from $\tau X$
to an object in $\BX$ cannot be a split monomorphism, so any such
morphism factors through $t$ which is hence an $\BX$-monomorphism.
This means that there is a distinguished right-triangle
\[
  \tau X
  \stackrel{\underline{t}}{\rightarrow} Y
  \rightarrow X
  \rightarrow \sigma \tau X
\]
in $\sT_{\BX}$, and this is isomorphic to
\[
  \tau X \stackrel{\underline{t}}{\rightarrow} Y
  \rightarrow 0 \rightarrow \sigma \tau X.
\]

Since $\sT_{\BX}$ is triangulated, this shows that $\underline{t}$ is
an isomorphism, and Lemma \ref{lem:indecomposables_in_factor}(i) gives
that $Y$ is a direct summand of $\tau X \coprod X^{\prime}$ in $\sT$
for some $X^{\prime}$ in $\BX$.  Hence $Y$ has the form $\tau X
\coprod X_1$ for some $X_1$ in $\BX$.  Note that $\tau X$ is forced to
be among the indecomposable direct summands of $Y$, since $Y$ would
otherwise be zero in $\sT_{\BX}$ forcing $\tau X$ to be zero in
$\sT_{\BX}$ and thereby contradicting that $\tau X$ is not in $\BX$.

But now $t$ has the form
\[
  \tau X \stackrel{
                   \left( \begin{array}{c} \scriptstyle u \\ \scriptstyle v \end{array} \right)
                  }{\longrightarrow}
  \tau X \coprod X_1,
\]
and since $X_1$ is in $\BX$ and hence zero in $\sT_{\BX}$, the
morphism $\underline{t}$ equals $\underline{u}$.  However,
$\underline{t}$ and hence $\underline{u}$ is an isomorphism, and so an
invertible element of the ring $\End_{\ssT_{\BX}}(\tau X)$.  This ring
is a quotient of $\End_{\ssT}(\tau X)$, and $\underline{u}$ is the
image of $u$.  But $\tau X$ is indecomposable so $\End_{\ssT}(\tau X)$
is local, and it follows that since $\underline{u}$ is invertible in
the quotient, $u$ must itself be invertible.

This implies that $t$ is split, contradicting that \eqref{equ:c} is an
AR triangle.
\end{proof}

\section{The Auslander-Reiten theory of quotient categories}
\label{sec:AR}

This section continues to work under Setup \ref{set:standard}.  

The quotient category $\sT_{\BX}$ is Krull-Schmidt by Lemma
\ref{lem:indecomposables_in_factor}(ii), and if $\tau\BX = \BX$ then
$\sT_{\BX}$ is triangulated by Theorem \ref{thm:TX_triangulated}.
This section shows that when $\sT_{\BX}$ is triangulated then it has
AR triangles and its AR quiver can be computed, as a translation
quiver, from the AR quiver of $\sT$ by deleting the vertices
corresponding to objects of $\BX$ along with the arrows into or out of
such vertices.  This sharpens Lemma
\ref{lem:indecomposables_in_factor}(iii).

\begin{Proposition}
\label{pro:AR}
Suppose that the AR translation $\tau$ of $\sT$ satisfies $\tau \BX =
\BX$.

Then the triangulated category $\sT_{\BX}$ has a Serre functor
$\underline{S}$, and the AR 
translation $\sigma^{-1} \circ \underline{S}$ of $\sT_{\BX}$ is equal
to the functor $\underline{\tau}$ on $\sT_{\BX}$ which is induced by
the functor $\tau$ on $\sT$.
\end{Proposition}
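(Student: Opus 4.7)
The strategy is to construct an explicit Serre functor $\underline{S}$ on $\sT_{\BX}$, namely $\underline{S} := \sigma \circ \underline{\tau}$. Granted this, the general formula ``AR translation $=$ (suspension)$^{-1}\circ$ (Serre functor)'' collapses at once to $\sigma^{-1}\circ \sigma \circ \underline{\tau} = \underline{\tau}$, which is the claimed identity.

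First I would verify that $\tau$ descends to an autoequivalence $\underline{\tau}$ of $\sT_{\BX}$. This is immediate from $\tau\BX=\BX$: a morphism factoring through some $X \in \BX$ is sent by $\tau$ to one factoring through $\tau X \in \BX$, so $\tau$ preserves the ideal defining $\sT_{\BX}$; the same applies to $\tau^{-1}$, making $\underline{\tau}$ an autoequivalence. Composing with the autoequivalence $\sigma$ of $\sT_{\BX}$ (Theorem \ref{thm:TX_triangulated}) then defines $\underline{S}$ as an autoequivalence of $\sT_{\BX}$.

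The heart of the argument is a natural isomorphism
\[
  \sT_{\BX}(B,\sigma\tau A) \;\cong\; \sT_{\BX}(A,B)^{\vee}.
\]
For this I would apply $\sT(B,-)$ to the defining triangle $\tau A \rightarrow X_{\tau A} \rightarrow \sigma\tau A \rightarrow \Sigma\tau A = SA$, obtaining the exact sequence
\[
  \sT(B,X_{\tau A}) \xrightarrow{\alpha} \sT(B,\sigma\tau A) \xrightarrow{\beta} \sT(B,SA) \xrightarrow{\gamma} \sT(B,\Sigma X_{\tau A}),
\]
and then make two identifications. On the one hand, since $\tau A \rightarrow X_{\tau A}$ is an $\BX$-preenvelope, hence an $\BX$-monomorphism, Lemma \ref{lem:mono_epi} makes $X_{\tau A}\rightarrow \sigma\tau A$ an $\BX$-epimorphism whose source lies in $\BX$, i.e.\ an $\BX$-precover; consequently the image of $\alpha$ equals $\BX(B,\sigma\tau A)$. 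On the other hand, using $S=\Sigma\tau$ we rewrite $\Sigma X_{\tau A} = S(\tau^{-1}X_{\tau A})$, and Serre duality in $\sT$ turns $\gamma$ into the $k$-dual of precomposition by a morphism $A \rightarrow \tau^{-1}X_{\tau A}$ corresponding to $\tau A \rightarrow X_{\tau A}$; since $\tau$ is an autoequivalence with $\tau\BX=\BX$, this morphism is an $\BX$-preenvelope of $A$, and its precomposition image is exactly $\BX(A,B)$. Thus $\ker\gamma$ is the annihilator of $\BX(A,B)$ in $\sT(B,SA)\cong \sT(A,B)^{\vee}$, which identifies with $\sT_{\BX}(A,B)^{\vee}$. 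Passing $\beta$ to the quotient by $\BX(B,\sigma\tau A)$ yields the desired isomorphism.

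Naturality in $A$ and $B$ then follows by standard diagram chases, using functoriality of the defining triangle just as in the construction of $\sigma$ in Section \ref{sec:pretriangulated}. This shows $\underline{S}$ is a Serre functor on $\sT_{\BX}$ and completes the argument. The main obstacle is the double identification of the image of $\alpha$ and the kernel of $\gamma$; both hinge on recognizing that the preenvelope and precover properties are preserved under the relevant autoequivalences, combined with Lemma \ref{lem:mono_epi} translating $\BX$-monomorphisms into $\BX$-epimorphisms across a distinguished triangle.
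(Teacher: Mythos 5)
Your proof is correct and is essentially the paper's argument in mirror image: the paper applies Hom-sequences to the precover triangle $\Sigma^{-1}M \rightarrow \sigma^{-1}M \rightarrow X^M \rightarrow M$ while you apply $\sT(B,-)$ to the preenvelope triangle at $\tau A$, but both rest on the same ingredients (identifying the ideal $\BX(-,-)$ via the precover/preenvelope property, Lemma \ref{lem:mono_epi} to pass between $\BX$-monomorphisms and $\BX$-epimorphisms, and Serre duality in $\sT$) and both arrive at $\underline{S} = \sigma\tau$, whence the AR translation $\sigma^{-1}\circ\underline{S}$ is induced by $\tau$. The only other difference is cosmetic: you conclude that the right Serre functor is a Serre functor because $\underline{S}=\sigma\circ\underline{\tau}$ is an autoequivalence, whereas the paper checks the left Serre property by a dual computation; both routes are justified by \cite[sec.\ I.1]{RVdB}.
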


\begin{proof}
Since $\sT_{\BX}$ is triangulated, $\omega$ is e\-qui\-va\-lent to
$\sigma^{-1}$ and the distinguished triangle \eqref{equ:defomega}
from Section \ref{sec:pretriangulated} reads
\begin{equation}
\label{equ:defsigmainverse}
  \Sigma^{-1}M \rightarrow \sigma^{-1}M \rightarrow X^M \rightarrow M.
\end{equation}

Let $N$ be an object of $\sT_{\BX}$ and note that $N$ can also be
viewed as an object of $\sT$.  Since $X^M \rightarrow M$ is an
$\BX$-precover, it is easy to see that the cokernel of $\sT(N,X^M)
\rightarrow \sT(N,M)$ is $\sT_{\BX}(N,M)$, and so there is an exact
sequence 
\begin{equation}
\label{equ:f}
  \sT(N,X^M) \rightarrow \sT(N,M) \rightarrow \sT_{\BX}(N,M) \rightarrow 0.
\end{equation}

For each $L$ in $\sT$, the distinguished triangle
\eqref{equ:defsigmainverse} gives a long exact sequence
\begin{equation}
\label{equ:defsigmainverse_long_exact_sequence}
  \sT(X^M,\Sigma^{-1}L) \rightarrow \sT(\sigma^{-1}M,\Sigma^{-1}L)
             \rightarrow \sT(M,L)
             \rightarrow \sT(X^M,L).
\end{equation}
Since $X^M \rightarrow M$ is an $\BX$-epimorphism, Lemma
\ref{lem:mono_epi} gives that $\sigma^{-1}M \rightarrow X^M$ is an
$\BX$-monomorphism and hence an $\BX$-preenvelope, and it is again
easy to see that the cokernel of
\[
  \sT(X^M,\Sigma^{-1}L) \rightarrow \sT(\sigma^{-1}M,\Sigma^{-1}L)
\]
is $\sT_{\BX}(\sigma^{-1}M,\Sigma^{-1}L)$.  However, by
\eqref{equ:defsigmainverse_long_exact_sequence} this co\-ker\-nel is
isomorphic to the kernel of
\[
  \sT(M, L) \rightarrow \sT(X^M,L),
\]
so
there is an exact sequence
\[
  0 \rightarrow \sT_{\BX}(\sigma^{-1}M,\Sigma^{-1}L)
    \rightarrow \sT(M,L)
    \rightarrow \sT(X^M,L).
\]

Setting $L = SN$ and taking the $k$-linear dual gives an exact
sequence 
\begin{equation}
\label{equ:g}
  \sT(X^M,SN)^{\vee} \rightarrow
  \sT(M,SN)^{\vee} \rightarrow
  \sT_{\BX}(\sigma^{-1}M,\Sigma^{-1}(SN))^{\vee} \rightarrow 0.
\end{equation}
But the sequences \eqref{equ:f} and \eqref{equ:g} fit together in a
commutative diagram
\[
  \xymatrix{
  \sT(N,X^M) \ar[r] \ar[d]^{\wr} & \sT(N,M) \ar[r] \ar[d]^{\wr} & \sT_{\BX}(N,M) \ar[r] \ar[d]^{\wr} & 0 \\
  \sT(X^M,SN)^{\vee} \ar[r] & \sT(M,SN)^{\vee} \ar[r] & \sT_{\BX}(\sigma^{-1}M,\Sigma^{-1}(SN))^{\vee} \ar[r] & 0 \lefteqn{,}
           }
\]
where the two first isomorphisms are by the definition of the Serre
functor $S$ and the third isomorphism follows from the first two.  So
there is a natural isomorphism
\[
  \sT_{\BX}(N,M) \cong \sT_{\BX}(M,\sigma(\Sigma^{-1}SN))^{\vee}.
\]

Hence $\sT_{\BX}$ has a right Serre functor $\underline{S}$ which is
given on objects by $\underline{S}N = \sigma(\Sigma^{-1}SN)$.  Similar
computations show that the same formula gives a left Serre functor,
and hence a Serre functor, see \cite[sec.\ I.1]{RVdB}.

The formula implies that the AR translation $\sigma^{-1} \circ
\underline{S}$ of $\sT_{\BX}$ is induced by $\Sigma^{-1}S = \tau$.
\end{proof}

Let $\Gamma$ be the AR quiver of $\sT$.  For the following theorem,
recall that the category $\sT$ is called standard if it is equivalent
to $\add k(\Gamma)$, the additive closure of the mesh category
$k(\Gamma)$, see \cite[def.\ 5.1]{BongartzGabriel}.

\begin{Theorem}
\label{thm:AR}
Suppose $\tau \BX = \BX$.

\begin{enumerate}

  \item  The triangulated category $\sT_{\BX}$ has AR triangles.

\smallskip
  
  \item As a translation quiver, the AR quiver $\Gamma^{\prime}$ of
  $\sT_{\BX}$ is obtained from the AR quiver $\Gamma$ of $\sT$ by
  deleting the vertices corresponding to objects of $\BX$ along with
  the arrows into or out of such vertices.

\smallskip

  \item  If $\sT$ is standard, then so is $\sT_{\BX}$.

\end{enumerate}
\end{Theorem}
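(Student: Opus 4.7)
These follow quickly from what is already in hand. Since $\tau\BX=\BX$, Theorem \ref{thm:TX_triangulated} makes $\sT_{\BX}$ triangulated; it is $k$-linear, Hom-finite (as a quotient of a Hom-finite category), and Krull--Schmidt by Lemma \ref{lem:indecomposables_in_factor}(ii), and it admits a Serre functor $\underline{S}$ by Proposition \ref{pro:AR}. The Reiten--Van den Bergh theorem \cite{RVdB} then yields AR triangles, giving (i). For (ii), Lemma \ref{lem:indecomposables_in_factor}(iii) already identifies the underlying quiver of $\Gamma'$; only the translations need to be matched. But Proposition \ref{pro:AR} shows that the AR translation on $\sT_{\BX}$ is induced by $\tau$, and since $\tau\BX=\BX$, the functor $\tau$ restricts to a bijection on the indecomposables outside $\BX$ --- which is precisely the translation that $\Gamma'$ inherits from $\Gamma$ after vertex-deletion.

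\textbf{Reduction for (iii).} Standardness of $\sT$ provides an equivalence $F : \add k(\Gamma) \to \sT$. Let $\BX_{\Gamma}$ denote the set of vertices of $\Gamma$ whose corresponding indecomposables lie in $\BX$; then $F$ identifies the class of objects of $\add k(\Gamma)$ that are finite direct sums of $\BX_{\Gamma}$-vertices with the class $\BX$ in $\sT$, so $F$ descends to an equivalence $\add k(\Gamma)/\BX_{\Gamma} \to \sT_{\BX}$ of quotient categories. Part (iii) is therefore reduced to the purely combinatorial claim that $\add k(\Gamma)/\BX_{\Gamma} \simeq \add k(\Gamma')$.

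\textbf{Main combinatorial step, and main obstacle.} Morphisms between indecomposables in the mesh category are $k$-linear combinations of paths in $\Gamma$ modulo the mesh relations $\sum_i \alpha_i\beta_i = 0$ indexed by the arrows into each vertex. The main obstacle is the following precise fact: the two-sided ideal in $\add k(\Gamma)$ generated by the identities $\id_X$ for $X \in \BX_{\Gamma}$ coincides with the span of paths in $\Gamma$ that visit some vertex of $\BX_{\Gamma}$. One direction is clear, since such a path factors through $\id_X$; for the other, a morphism factoring through a direct sum of $\BX_{\Gamma}$-vertices decomposes into components each factoring through a single $\BX_{\Gamma}$-vertex, whence the claim. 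Granting this, one reads off the surviving mesh relations: the mesh at a vertex $M \in \BX_{\Gamma}$ has both endpoints $\tau M$ and $M$ in $\BX_{\Gamma}$ (by $\tau\BX=\BX$) and so disappears with them, while the mesh at $M \notin \BX_{\Gamma}$ loses precisely the summands $\alpha_i\beta_i$ for which $M_i \in \BX_{\Gamma}$ (paths through $\BX_{\Gamma}$, hence zero) and reduces to $\sum_{M_i \notin \BX_{\Gamma}} \alpha_i\beta_i = 0$, which is the mesh relation at $M$ in $\Gamma'$. This delivers $\add k(\Gamma)/\BX_{\Gamma} \simeq \add k(\Gamma')$, and composing with the equivalence above proves (iii).
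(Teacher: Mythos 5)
Your proof is correct and takes essentially the same route as the paper: parts (i) and (ii) via Proposition \ref{pro:AR}, the Reiten--Van den Bergh result and Lemma \ref{lem:indecomposables_in_factor}(iii), and part (iii) via the reduction $\sT_{\BX} \simeq (\add k(\Gamma))_{\BV}$ followed by an equivalence $(\add k(\Gamma))_{\BV} \simeq \add k(\Gamma^{\prime})$. Your mesh-relation bookkeeping (the ideal generated by the identities of $\BX$-vertices equals the span of paths through such vertices, meshes at $\BX$-vertices vanish because $\tau \BX = \BX$, and the remaining meshes reduce to those of $\Gamma^{\prime}$) just makes explicit the verification the paper leaves implicit when it asserts that the functor $\add k(\Gamma) \rightarrow \add k(\Gamma^{\prime})$ sending the objects of $\BV$ to $0$ induces the required equivalence.
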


\begin{proof}
(i) By Proposition \ref{pro:AR}, the category $\sT_{\BX}$ has a Serre
functor, and hence also AR triangles by \cite[prop.\ I.2.3]{RVdB}.

(ii) Lemma \ref{lem:indecomposables_in_factor}(iii) implies that the
vertices and arrows of $\Gamma^{\prime}$ can be obtained from those of
$\Gamma$ as described.  The AR translation of $\Gamma$ descends to the
AR translation of $\Gamma^{\prime}$ by Proposition \ref{pro:AR}.

(iii)  Suppose that $\sT$ is standard.  This means that there is an
equivalence of categories $\sT \simeq \add k(\Gamma)$.  Let $\BV$ be
the set of objects in $\add k(\Gamma)$ which corresponds to $\BX$, and
let $\Gamma^{\prime}$ be the AR quiver of $\sT_{\BX}$.  There is an
equivalence $\sT_{\BX} \simeq (\add k(\Gamma))_{\BV}$, so it is enough
to see that there is an equivalence $(\add k(\Gamma))_{\BV} \simeq
\add k(\Gamma^{\prime})$.

However, there is a functor $\add k(\Gamma) \rightarrow \add
k(\Gamma^{\prime})$ given by sending each object of $\BV$ to $0$.
This functor factors through a functor $(\add k(\Gamma))_{\BV}
\rightarrow \add k(\Gamma^{\prime})$ which is the required
equivalence. 
\end{proof}

\section{Cluster categories}
\label{sec:clusters}

This section applies the methods of the previous sections to show in
Corollaries \ref{cor:A} and \ref{cor:D} that in Dynkin types $A$ and
$D$, all $u$-cluster categories are triangulated quotients of $1$- and
$2$-cluster categories.

The following result is due to Amiot \cite[thm.\ 1.1.1 and thm.\ 7.0.5
plus proof]{Amiot}.

\begin{Theorem}
\label{thm:Amiot}
Let $\sU$ and $\sV$ be $k$-linear triangulated categories with finite
dimensional $\Hom$ spaces and split idempotents.  Suppose that $\sU$
and $\sV$ are of algebraic origin, standard, connected, and have only
a finite number of isomorphism classes of indecomposable objects.

Then $\sU$ and $\sV$ have AR triangles, and if the AR quivers of $\sU$
and $\sV$ are isomorphic as translation quivers, then $\sU$ and $\sV$
are equivalent as triangulated categories.
\end{Theorem}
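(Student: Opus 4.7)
The plan is to separate the statement into two independent assertions and to treat the second, harder one by reducing the problem to a question about dg enhancements.

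First I would dispose of the existence of AR triangles. Under the hypotheses, the category is Krull--Schmidt (finite-dimensional $\Hom$ spaces over an algebraically closed field plus split idempotents), and finite representation type together with the algebraic origin guarantees a Serre functor: one can either invoke Reiten--Van den Bergh directly (a Hom-finite, idempotent-complete, algebraic triangulated category with finitely many indecomposables admits a Serre functor), or note that since there are only finitely many indecomposables, every $\sT(-,N)$ is finite-dimensional and hence corepresentable by Serre duality, which yields the Serre functor. Then \cite[prop.\ I.2.3]{RVdB} produces AR triangles. This takes care of both $\sU$ and $\sV$.

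Next I would build an additive equivalence from the hypothesis that $\Gamma_\sU \cong \Gamma_\sV$ as translation quivers. By the definition of standard, there are $k$-linear equivalences $\sU \simeq \add k(\Gamma_\sU)$ and $\sV \simeq \add k(\Gamma_\sV)$. An isomorphism of translation quivers induces an isomorphism of mesh categories $k(\Gamma_\sU) \cong k(\Gamma_\sV)$, hence an equivalence of their additive closures, and so a $k$-linear equivalence $F \colon \sU \to \sV$ that commutes with the AR translations. This step is essentially combinatorial once standardness is in play, and I expect no trouble here.

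The hard part is the last step: promoting $F$ to a \emph{triangulated} equivalence. Here the hypothesis of algebraic origin is crucial. One fixes dg enhancements $\cU$ and $\cV$ of $\sU$ and $\sV$, and rephrases the problem as producing a quasi-equivalence $\cU \simeq \cV$ which, on homotopy categories, agrees with $F$. The strategy, following Amiot, is to realize both $\cU$ and $\cV$ as the derived category of some canonical small dg subcategory, for example the dg subcategory on a basic additive generator, and to appeal to a uniqueness-of-enhancement result: the mesh relations encoded in the AR quiver together with the algebraic structure rigidify the $A_\infty$ structure up to quasi-isomorphism. Concretely, one shows that the endomorphism dg algebra of a basic additive generator is formal and has Hochschild cohomology vanishing in the relevant degrees controlling deformations of the $A_\infty$ structure, so the dg enhancement is determined by its underlying graded category, which is in turn determined by the AR quiver. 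This is where essentially all the weight of the theorem rests, and it is the obstacle that the combinatorial input (the AR quiver) by itself cannot resolve without the ``algebraic origin'' assumption; indeed, without it, non-standard or topological triangulated categories with the same AR quiver but inequivalent triangulated structures exist. Once this rigidity is in hand, transporting triangles along $F$ is automatic and the theorem follows.
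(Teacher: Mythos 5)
The paper gives no proof of this statement at all: it is quoted as a theorem of Amiot, with the argument delegated to \cite[thm.\ 1.1.1 and thm.\ 7.0.5 plus proof]{Amiot}. So the only meaningful comparison is with Amiot's argument, and measured against that your proposal has a genuine gap exactly where the weight of the theorem lies. Your first two steps are fine in outline: AR triangles follow from the existence of a Serre functor via \cite[prop.\ I.2.3]{RVdB} (though note that this needs neither algebraicity nor standardness --- it is Amiot's thm.\ 1.1.1, valid for any $\Hom$-finite Krull--Schmidt triangulated category with finitely many indecomposables --- and your alternative justification ``finite-dimensional, hence corepresentable by Serre duality'' is circular as written, since Serre duality is what is being established); and standardness plus an isomorphism of translation quivers does give a $k$-linear equivalence $\add k(\Gamma) \simeq \add k(\Gamma')$ commuting with the translations.

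The third step, however, is asserted rather than proved. The claim that ``the endomorphism dg algebra of a basic additive generator is formal and has Hochschild cohomology vanishing in the relevant degrees'' is precisely the hard content of Amiot's thm.\ 7.0.5: you offer no argument for this vanishing, and you also do not address why the underlying graded category (all spaces $\sU(X,\Sigma^n Y)$ with their compositions) should be determined by the AR quiver --- a translation quiver records $\tau$ but not the suspension, so one must first pin down the action of $\Sigma$ (say via $\Sigma \cong S\tau^{-1}$ and uniqueness of the Serre functor on $\add k(\Gamma)$) before the phrase ``determined by its underlying graded category'' has any force. Moreover, Amiot's actual proof is not a two-sided uniqueness-of-enhancement comparison of $\sU$ with $\sV$: she proves a structure theorem, showing via Riedtmann's classification that the AR quiver is $\BZ\Delta/G$ with $\Delta$ Dynkin, and then, using algebraicity and Keller's (dg) orbit category machinery, that any connected algebraic standard category with this AR quiver is triangle equivalent to the canonical model $\D(k\Delta)/\Phi$; the equivalence of $\sU$ and $\sV$ follows because both are equivalent to the same orbit category. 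As written, your last paragraph restates the theorem in dg language rather than proving it; either that rigidity argument must be carried out, or one should do what the paper does and simply cite Amiot.
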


Note that in particular, $\sU$ or $\sV$ could be taken to be a
$u$-cluster category of finite type.  Such categories are of algebraic
origin by the theory of \cite[sec.\ 9.3]{Keller} (see also \cite[sec.\ 
7.3]{Amiot}), and they are standard by \cite[prop.\ 6.1.1]{Amiot}.

\setcounter{subsection}{0}
\subsection{Type A}

\begin{Theorem}
\label{thm:A}
Let $u \geq v$ be positive integers for which $u \equiv v \modulo 2$.
Let $m,n$ be positive integers such that
\[
  u(m+1) = v(n+1).
\]
Then the $u$-cluster category of type $A_m$ is triangulated equivalent
to a quotient category of the $v$-cluster category of type $A_n$.
\end{Theorem}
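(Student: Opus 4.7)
The strategy is to combine Theorem~\ref{thm:TX_triangulated}, Theorem~\ref{thm:AR}, and Amiot's recognition theorem (Theorem~\ref{thm:Amiot}) in sequence. Let $\sT$ denote the $v$-cluster category of type $A_n$; it is $k$-linear, Hom-finite, Krull-Schmidt, of finite type, standard, connected, and of algebraic origin, so it fits Setup~\ref{set:standard}. I will exhibit a class $\BX$ of objects of $\sT$, closed under isomorphisms, direct sums, and direct summands, satisfying $\tau\BX = \BX$, with the property that the translation quiver obtained from the AR quiver of $\sT$ by deleting the vertices corresponding to $\BX$ (together with the incident arrows) is isomorphic to the AR quiver of the $u$-cluster category of type $A_m$.

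To define $\BX$, I use the standard polygon model for $u$-cluster categories of type $A$ (as in \cite{HolmJorgensenA}): indecomposables of the $w$-cluster category of type $A_r$ are parametrised by a prescribed family of diagonals of a regular $(w(r+1)+2)$-gon, with AR translation realised as rotation by one vertex and irreducible maps given by the appropriate ``flip'' adjacency of diagonals.  The equation $u(m+1) = v(n+1)$ places both $\sT$ and the $u$-cluster category of type $A_m$ on a common polygon of size $N$, and the parity condition $u \equiv v \pmod{2}$ is exactly what allows the family of diagonals describing the $u$-cluster category of type $A_m$ to be identified with a rotation-stable subset of the family describing $\sT$, with adjacencies matching on the nose.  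Let $\BY$ be the set of indecomposables of $\sT$ not arising in this way, and put $\BX := \add\BY$.  Then $\BX$ is closed under isomorphisms, direct sums, and summands by construction; $\tau\BX = \BX$ because rotation preserves the diagonal type; and $\BX$ is functorially finite --- hence preenveloping and precovering --- since $\sT$ has only finitely many indecomposables.

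Now Theorem~\ref{thm:TX_triangulated} gives a triangulated structure on $\sT_{\BX}$; Theorem~\ref{thm:AR}(ii) says that its AR quiver is the AR quiver of $\sT$ with the vertices in $\BX$ deleted, which by the construction above is the AR quiver of the $u$-cluster category of type $A_m$; and Theorem~\ref{thm:AR}(iii) shows that $\sT_{\BX}$ is standard.  Both $\sT_{\BX}$ and the $u$-cluster category of type $A_m$ are then $k$-linear, Hom-finite, Krull-Schmidt triangulated categories, standard, connected (their common AR quiver being connected), of algebraic origin, and of finite type, with isomorphic AR quivers.  Theorem~\ref{thm:Amiot} then supplies the required triangulated equivalence.

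\textbf{Main obstacle.} The heart of the argument is the combinatorial matching step: verifying that the AR quiver of the $u$-cluster category of type $A_m$ can be realised as a rotation-stable full sub-translation-quiver of the AR quiver of $\sT$.  This is the step in which the parity hypothesis $u \equiv v \pmod{2}$ is used essentially; the equation $u(m+1) = v(n+1)$ by itself only arranges that both sides sit on a polygon of the same size.  A secondary technical point is confirming that $\sT_{\BX}$ qualifies as being ``of algebraic origin'' in the sense of Theorem~\ref{thm:Amiot}; this should follow from algebraicity of $\sT$ together with the explicit standard structure supplied by Theorem~\ref{thm:AR}(iii).
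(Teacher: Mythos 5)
Your overall architecture is exactly the paper's: take $\sT$ to be the $v$-cluster category of type $A_n$, choose $\BX$ to be $\add$ of a $\tau$-stable finite set of indecomposables (so that $\BX$ is automatically precovering and preenveloping), invoke Theorem \ref{thm:TX_triangulated} for the triangulated structure on $\sT_{\BX}$, Theorem \ref{thm:AR}(ii)--(iii) for its AR quiver and standardness, and then Theorem \ref{thm:Amiot} to upgrade an isomorphism of AR quivers to a triangulated equivalence. Up to that point the proposal is sound, and the secondary points you raise (algebraic origin of $\sT_{\BX}$ and, one should add, split idempotents) are handled in the paper at the same level of brevity.

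The genuine gap is the step you yourself label the ``main obstacle'': the claim that, because $u \equiv v \pmod{2}$ and $u(m+1)=v(n+1)$, the diagonals modelling the $u$-cluster category of type $A_m$ ``can be identified with a rotation-stable subset of the family describing $\sT$, with adjacencies matching on the nose''. This is precisely the combinatorial content of the theorem, and it is asserted rather than proved; moreover, as sketched it is not quite right. In the common $N$-gon, $N=u(m+1)+2=v(n+1)+2$, the $u$-allowable diagonals (both arcs of length $\equiv 1 \pmod{u}$) are in general not $v$-allowable --- for instance $u=5$, $v=3$, $m=2$, $n=4$, $N=17$: the $5$-diagonals have arcs $\{6,11\}$, which are not $\equiv 1 \pmod{3}$ --- so there is no inclusion of one family into the other; the ``identification'' has to be constructed, and constructing it is exactly the missing verification. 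Also, the AR translation in the polygon model of a $w$-cluster category is not rotation by one vertex when $w \geq 2$: rotation by one vertex has orbits of length $N$ on non-diameters, whereas for $w$ even the $\tau$-orbits have length $N/2$. The paper fills this gap directly on the AR quivers: the AR quiver of $\sT$ is $\BZ A_n$ modulo the shift by $v\frac{n+1}{2}+1$, with a reflection exactly when $v$ is odd (diagrams \eqref{equ:Aeven} and \eqref{equ:Aodd}); one takes $\BX$ to be $\add$ of a band of width $n-m$ along the bottom when $u,v$ are even, and of two bands of width $\frac{n-m}{2}$ along top and bottom when $u,v$ are odd (the symmetric choice is what makes $\tau\BX=\BX$ in the M\"obius case); then the identity $v\frac{n+1}{2}+1=u\frac{m+1}{2}+1$ together with the parity hypothesis guarantees that the resulting translation quiver is the AR quiver of the $u$-cluster category of type $A_m$ --- same width, same circumference, and same cylinder-versus-M\"obius orientation. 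Some such explicit matching, whether phrased in AR-quiver or polygon language, is what your proposal still has to supply.
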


\begin{proof}  
The proof will work by appealing to the theory of triangulated
quotient categories developed above, and to Theorem \ref{thm:Amiot}.
The categories which will be inserted into Theorem \ref{thm:Amiot}
will be (1) the $u$-cluster category of type $A_m$, and (2) a
suitably constructed quotient of the $v$-cluster category of type
$A_n$. As remarked above, the conditions of Theorem \ref{thm:Amiot}
hold for $u$-cluster categories of finite type; I will show along
the way that they also hold for the quotient category in question.

The AR quiver of $\D(kA_n)$ is $\BZ A_n$ by \cite[cor.\
4.5]{Happel}; let me visualize this as a horizontal band $n$ vertices
wide, 
\[
  \def\objectstyle{\scriptstyle}
  \vcenter{
  \xymatrix @!0 @-1pc {
    *{} \ar@{-}[rrrrrrrr] & & & & & & & & *{} \ar[dr] \ar@{-}[rr] & & *{n} \ar[dr] \ar@{-}[rrrr] & & & & *{} \\
    & & & & & & & *{} \ar[dr] & & *{} \ar[ur] \ar[dr] & & *{} & & & \\
    *{} & & & & & & & & *{} \ar[ur] & & *{} & & & & *{} \\
    & & *{} \ar@{.}[rr] & & *{} & & & & & & *{} \ar@{.}[rr] & & *{} & & &\\
    *{} & & & & *{} \ar[dr] & & *{} \ar@{.}[uurr] \ar[dr] & & & & & && & \\
    & & & *{} \ar[dr] & & *{2} \ar[ur] \ar[dr] & & *{} & & & & & & & \\
    *{} \ar@{-}[rrrr] & & & & *{1} \ar[ur] \ar@{-}[rr] & & *{} \ar@{-}[rrrrrrrr] & & & & & & & & *{} \\
                      }
          }.
\]
Declare the distance between two horizontally neighbouring vertices to
be one unit.  The action of $\tau^{-1}$ on the AR quiver is to shift
one unit to the right.  The action of $\Sigma$ is to shift
$\frac{n+1}{2}$ units to the right, and reflect in the horizontal
centre line.  Both claims follow from \cite[table p.\
359]{MiyachiYekutieli}.  Hence the action of $\tau^{-1}\Sigma^v$ on
the AR quiver is to shift $v\frac{n+1}{2} + 1$ units to the right, and
reflect in the horizontal centre line if $v$ is odd.

Let $\sT$ denote the $v$-cluster category of type $A_n$, that is,
\[
  \sT = \D(kA_n)/\tau^{-1}\Sigma^v.
\]
The AR quiver of $\sT$ is the AR quiver of $\D(kA_n)$ modulo the
action of $\tau^{-1}\Sigma^v$ by \cite[prop.\ 1.3]{BMRRT}.  Hence the
AR quiver of $\sT$ is $\BZ A_n$ modulo an automorphism given by
shifting $v\frac{n+1}{2} + 1$ units to the right, and reflecting in
the horizontal centre line if $v$ is odd.

This means that if $v$ is even, then the AR quiver of $\sT$ has the
form 
\begin{equation}
\label{equ:Aeven}
  \def\objectstyle{\scriptstyle}
  \vcenter{
  \xymatrix @!0 @-1pc {
    *{} \ar@{-}[rrrrrrrr] & & & & & & & & *{} \ar[dr] \ar@{-}[rr] & & *{n} \ar[dr] \ar@{-}[rrrr] & & & & *{} \\
    & & & & & & & *{} \ar[dr] & & *{} \ar[ur] \ar[dr] & & *{} & & & \\
    *{} \ar[uu] & & & & & & & & *{} \ar[ur] & & *{} & & & & *{} \ar[uu] \\
    & & *{} \ar@{.}[rr] & & *{} & & & & & & *{} \ar@{.}[rr] & & *{} & & \\
    *{} \ar[uu] & & & & *{} \ar[dr] & & *{} \ar@{.}[uurr] \ar[dr] & & & & & & & & *{}\ar[uu]\\
    & & & *{} \ar[dr] & & *{2} \ar[ur] \ar[dr] & & *{} & & & & & & & \\
    *{} \ar@{-}[rrrr] \ar[uu] & & & & *{1} \ar[ur] \ar@{-}[rr] & & *{} \ar@{-}[rrrrrrrr] & & & & & & & & *{} \ar[uu]\\
    & & & & & \lefteqn{\textstyle{v\frac{n+1}{2}+1}} & & & & & & & & & \\
                      }
          }\;,
\end{equation}
where the vertical ends of the rec\-tan\-gle are identified, with the
orientation indicated by the arrows.  The number below the quiver
indicates side length.  And if $v$ is odd, then the AR quiver of $\sT$
has the form 
\begin{equation}
\label{equ:Aodd}
  \def\objectstyle{\scriptstyle}
  \vcenter{
  \xymatrix @!0 @-1pc {
    *{} \ar@{-}[rrrrrrrr] & & & & & & & & *{} \ar[dr] \ar@{-}[rr] & & *{n} \ar[dr] \ar@{-}[rrrr] & & & & *{} \ar[dd]\\
    & & & & & & & *{} \ar[dr] & & *{} \ar[ur] \ar[dr] & & *{} & & & \\
    *{} \ar[uu] & & & & & & & & *{} \ar[ur] & & *{} & & & & *{} \ar[dd] \\
    & & *{} \ar@{.}[rr] & & *{} & & & & & & *{} \ar@{.}[rr] & & *{} & & \\
    *{} \ar[uu] & & & & *{} \ar[dr] & & *{} \ar@{.}[uurr] \ar[dr] & & & & & & & & *{}\ar[dd]\\
    & & & *{} \ar[dr] & & *{2} \ar[ur] \ar[dr] & & *{} & & & & & & & \\
    *{} \ar@{-}[rrrr] \ar[uu] & & & & *{1} \ar[ur] \ar@{-}[rr] & & *{} \ar@{-}[rrrrrrrr] & & & & & & & & *{} \\
    & & & & & \lefteqn{\textstyle{v\frac{n+1}{2}+1}} & & & & & & & & & \\
                      }
          }\;,
\end{equation}
where the vertical ends of the rec\-tan\-gle are again identified,
with the orientation indicated by the arrows.

Let me now prove the theorem for $u$ and $v$ even.  Here the AR quiver
of $\sT$ is given by diagram \eqref{equ:Aeven}.  Consider the
indecomposable objects in a band $n-m$ vertices wide along the bottom
of the AR quiver, and let $\BX$ denote $\add$ of these.  Since $\BX$
is $\add$ of a finite set of objects, it is precovering and
preenveloping, and since $\tau$ preserves the set of objects in
question, it is clear that $\tau \BX = \BX$.

Let me check the conditions of Theorem \ref{thm:Amiot} for the
quotient category $\sT_{\BX}$.  Theorem \ref{thm:TX_triangulated} says
that it is triangulated.  It is clear that $\sT_{\BX}$ is $k$-linear
and has finite dimensional $\Hom$ spaces.  It is an exercise to check
that $\sT_{\BX}$ has split idempotents.  Since $\sT$ is of algebraic
origin, so is $\sT_{\BX}$.  Since $\sT$ is standard, $\sT_{\BX}$ is
standard by Theorem \ref{thm:AR}(iii).

Finally, by Theorem \ref{thm:AR}(ii), the AR quiver of $\sT_{\BX}$ is
obtained by deleting from the AR quiver of $\sT$ the band $n-m$
vertices wide along the bottom.  Therefore the AR quiver of
$\sT_{\BX}$ is $m$ vertices wide and has the form
\[
  \def\objectstyle{\scriptstyle}
  \def\labelstyle{\textstyle}
  \xymatrix @!0 @-1pc {
    *{} \ar@{-}[rrrrrrrr] & & & & & & & & *{} \ar[dr] \ar@{-}[rr] & & *{m} \ar[dr] \ar@{-}[rrrr] & & & & *{} \\
    & & & & & & & *{} \ar[dr] & & *{} \ar[ur] \ar[dr] & & *{} & & & \\
    *{} \ar[uu] & & & & & & & & *{} \ar[ur] & & *{} & & & & *{} \ar[uu] \\
    & & *{} \ar@{.}[rr] & & *{} & & & & & & *{} \ar@{.}[rr] & & *{} & & \\
    *{} \ar[uu] & & & & *{} \ar[dr] & & *{} \ar@{.}[uurr] \ar[dr] & & & & & & & & *{}\ar[uu]_{\;;}\\
    & & & *{} \ar[dr] & & *{2} \ar[ur] \ar[dr] & & *{} & & & & & & & \\
    *{} \ar@{-}[rrrr] \ar[uu] & & & & *{1} \ar[ur] \ar@{-}[rr] & & *{} \ar@{-}[rrrrrrrr] & & & & & & & & *{} \ar[uu]\\
    & & & & & \lefteqn{\textstyle{v\frac{n+1}{2}+1}} & & & & & & & & & \\
                      }
\]
in particular, $\sT_{\BX}$ is connected and has only a finite number
of isomorphism classes of indecomposable objects.  This shows that
$\sT_{\BX}$ satisfies the conditions of Theorem \ref{thm:Amiot}.

Now note that
\[
  \textstyle  v\frac{n+1}{2} + 1 = u\frac{m+1}{2} + 1
\]
because $v(n+1) = u(m+1)$.  Since $u$ is even, this implies that the
AR quiver of $\sT_{\BX}$ is precisely the AR quiver of the $u$-cluster
category of type $A_m$.  Hence Theorem \ref{thm:Amiot} with $\sU$
equal to the $u$-cluster category of type $A_m$ and $\sV$ equal to
$\sT_{\BX}$ says that these two categories are triangulated
equivalent.

In other words, the $u$-cluster category of type $A_m$ is triangulated
e\-qui\-va\-lent to $\sT_{\BX}$ which is a quotient category of $\sT$,
the $v$-cluster category of type $A_n$; this is the desired result.

Next the proof for $u$ and $v$ odd.  Here the AR quiver of $\sT$ is
given by diagram \eqref{equ:Aodd}.  The equation $v(n+1) = u(m+1)$
forces the difference $n-m$ to be even.  Let $\BX$ be add of the
indecomposable objects in two bands $\frac{n-m}{2}$ vertices wide
along the top and bottom of the AR quiver.  Then arguments like the
ones above show that the $u$-cluster category of type $A_m$ is
triangulated equivalent to the quotient category $\sT_{\BX}$, again as
desired.
\end{proof}

\begin{Corollary}
\label{cor:A}
Each $u$-cluster category of type $A$ is triangulated e\-qui\-va\-lent
to a quotient of a $1$- or a $2$-cluster category of type $A$.
\end{Corollary}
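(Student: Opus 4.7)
The plan is to deduce this as a direct application of Theorem \ref{thm:A}, by choosing $v$ according to the parity of $u$. Given a $u$-cluster category of type $A_m$, set $v=1$ if $u$ is odd and $v=2$ if $u$ is even. In either case one has $u \geq v$ and $u \equiv v \pmod 2$, so the parity hypothesis of Theorem \ref{thm:A} is met.

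Next I would produce the matching $n$. For $u$ odd and $v=1$, the equation $u(m+1) = v(n+1)$ forces $n = u(m+1)-1$, which is a positive integer since $u,m \geq 1$. For $u$ even and $v=2$, the equation forces $n = \tfrac{u(m+1)}{2} - 1$, which is a positive integer because $u$ even makes $u(m+1)$ even, and $u(m+1) \geq 4$. With these choices, Theorem \ref{thm:A} immediately gives that the $u$-cluster category of type $A_m$ is triangulated equivalent to a quotient of the $v$-cluster category of type $A_n$, namely a $1$-cluster category (when $u$ is odd) or a $2$-cluster category (when $u$ is even).

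There is essentially no obstacle here beyond verifying the elementary arithmetic. The only degenerate cases are $u=1$ and $u=2$, in which $v=u$ and $n=m$, so the corresponding $u$-cluster category is trivially a quotient of itself (taking $\BX = 0$ in the formalism of Section \ref{sec:triangulated}); these cases are subsumed by Theorem \ref{thm:A} without incident. Thus the corollary reduces to bookkeeping about parity and the divisibility $v \mid u(m+1)$, both of which are automatic for $v \in \{1,2\}$ of the correct parity.
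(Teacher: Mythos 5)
Your proposal is correct and follows exactly the paper's own argument: apply Theorem \ref{thm:A} with $v=1$, $n=u(m+1)-1$ when $u$ is odd, and $v=2$, $n=\frac{u}{2}(m+1)-1$ when $u$ is even. The extra remarks about positivity of $n$ and the degenerate cases $u=1,2$ are harmless elaborations of the same reduction.
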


\begin{proof}
This is clear from Theorem \ref{thm:A}: If $u$ is odd then set $v =
1$ and $n = u(m+1) - 1$, and if $u$ is even then set $v = 2$ and $n
= \frac{u}{2}(m+1) - 1$.
\end{proof}

The corollary shows that in a sense, in type $A$, every phenomenon of
$u$-cluster categories is already implicit in $1$- and $2$-cluster
categories.  Both $1$- and $2$-cluster categories are needed since the
AR quiver of a $1$-cluster category is a M\"{o}bius band while that of
a $2$-cluster category is a wreath, whence neither can be obtained
from the other.

\setcounter{subsection}{3}
\subsection{Type D}

The AR quiver of $\D(kD_n)$ is $\BZ D_n$ by \cite[cor.\ 4.5]{Happel};
let me visualize this as a horizontal band,
\[
  \def\labelstyle{\textstyle}
  \def\objectstyle{\scriptscriptstyle}
  \vcenter{
  \xymatrix @!0 @-1pc {
    *{} \ar@{-}[rrrrrrrr] & & & & & & & & *{} \ar[dr] \ar@{-}[rr] & & *{n-1} \ar[dr] \ar@{-}[rrrr] & & & & *{} \\
    & & & & & & & *{} \ar[dr] \ar[r] & *{} \ar[r] & *{} \ar[ur] \ar[dr] \ar[r] & *{n} \ar[r] & *{} & & & \\
    *{} & & & & & & & & *{} \ar[ur] & & *{} & & & & *{} \\
    & & *{} \ar@{.}[rr] & & *{} & & & & & & *{} \ar@{.}[rr] & & *{} & & &\\
    *{} & & & & *{} \ar[dr] & & *{} \ar@{.}[uurr] \ar[dr] & & & & & && & \\
    & & & *{} \ar[dr] & & *{2} \ar[ur] \ar[dr] & & *{} & & & & & & & \\
    *{} \ar@{-}[rrrr] & & & & *{1} \ar[ur] \ar@{-}[rr] & & *{} \ar@{-}[rrrrrrrr] & & & & & & & & *{} \\
                      }
          }.
\]
The action of $\tau^{-1}$ on the AR quiver is to shift one unit to the
right.  The action of $\Sigma$ is to shift $n-1$ units to the right,
and switch the two `exceptional' vertices $n-1$ and $n$ at the top if
$n$ is odd.  Both claims again follow from \cite[table p.\ 
359]{MiyachiYekutieli}.  Hence the action of $\tau^{-1}\Sigma^v$ on
the AR quiver is to shift $v(n-1) + 1$ units to the right, and switch
the exceptional vertices if $v$ and $n$ are both odd.

Let $\sT$ denote the $v$-cluster category of type $D_n$, that is,
\[
  \sT = \D(kD_n)/\tau^{-1}\Sigma^v.
\]
The AR quiver of $\sT$ is the AR quiver of $\D(kD_n)$ modulo the
action of $\tau^{-1}\Sigma^v$ by \cite[prop.\ 1.3]{BMRRT}.  Hence the
AR quiver of $\sT$ is $\BZ D_n$ modulo an automorphism given by
shifting $v(n-1) + 1$ units to the right, and switching the
exceptional vertices if $v$ and $n$ are both odd.

This means that the AR quiver of $\sT$ has the form 
\begin{equation}
\label{equ:D}
  \def\labelstyle{\textstyle}
  \def\objectstyle{\scriptstyle}
  \vcenter{
  \xymatrix @!0 @-1pc {
    *{} \ar@{-}[rrrrrrrr] & & & & & & & & *{} \ar[dr] \ar@{-}[rr] & & *{n-1} \ar[dr] \ar@{-}[rrrr] & & & & *{} \\
    & & & & & & & *{} \ar[dr] \ar[r] & *{} \ar[r] & *{} \ar[ur] \ar[dr] \ar[r] & *{n} \ar[r] & *{} & & & \\
    *{} \ar[uu] & & & & & & & & *{} \ar[ur] & & *{} & & & & *{} \ar[uu] \\
    & & *{} \ar@{.}[rr] & & *{} & & & & & & *{} \ar@{.}[rr] & & *{} & & \\
    *{} \ar[uu] & & & & *{} \ar[dr] & & *{} \ar@{.}[uurr] \ar[dr] & & & & & & & & *{}\ar[uu]\\
    & & & *{} \ar[dr] & & *{2} \ar[ur] \ar[dr] & & *{} & & & & & & & \\
    *{} \ar@{-}[rrrr] \ar[uu] & & & & *{1} \ar[ur] \ar@{-}[rr] & & *{} \ar@{-}[rrrrrrrr] & & & & & & & & *{} \ar[uu]\\
    & & & & & \lefteqn{\textstyle{v(n-1)+1}} & & & & & & & & & \\
                      }
          }\;,
\end{equation}
where the vertical ends of the rec\-tan\-gle are identified.  The
number below the quiver indicates side length. 

The action of the AR translation on the quiver requires an
explanation: If $v$ and $n$ are both odd, then $\tau^{-1}\Sigma^v$
switches the exceptional vertices; otherwise, it does not.  This does
not make any difference to the way the quiver looks because the
exceptional vertices are attached to the rest of the quiver in a
symmetrical way.  But it does mean that in the two cases, the action
of $\tau^{-1}$ on $\BZ D_n$ induces different actions of $\tau^{-1}$
on the quiver \eqref{equ:D}.  Namely, if $v$ and $n$ are both odd so
the exceptional vertices are switched, then $\tau^{-1}$ also switches
between these vertices at the vertical ends of the rectangle, and so
$\tau^{-1}$ has {\em one long} orbit of exceptional vertices.  But if
$v$ or $n$ is even so the exceptional vertices are not switched, then
$\tau^{-1}$ does not switch between these vertices at the vertical
ends of the rectangle, and so $\tau^{-1}$ has {\em two shorter} orbits
of exceptional vertices.

By considering the indecomposable objects in a band $n-m$ vertices
wide along the bottom of the AR quiver and following the same strategy
as the proof of Theorem \ref{thm:A}, the following theorem can now be
proved.

\begin{Theorem}
\label{thm:D}
Let $u \geq v$ be positive integers and let $m,n \geq 4$ be integers
such that
\[
  u(m-1) = v(n-1).
\]
Suppose that if $u,m$ are both odd, then $v,n$ are both odd.

Then the $u$-cluster category of type $D_m$ is triangulated equivalent
to a quotient category of the $v$-cluster category of type $D_n$.
\end{Theorem}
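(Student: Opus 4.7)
The plan is to mirror the proof of Theorem \ref{thm:A}: realize the $u$-cluster category of type $D_m$ as a quotient $\sT_{\BX}$ of $\sT$, the $v$-cluster category of type $D_n$, by deleting a suitable band of objects from the AR quiver, then invoke Theorem \ref{thm:Amiot} to upgrade the resulting isomorphism of AR quivers to a triangulated equivalence.

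Concretely, starting from the AR quiver \eqref{equ:D} of $\sT$, which has width $n$ and side length $v(n-1)+1$, I would let $\BX$ be the additive hull of the indecomposable objects lying in a horizontal band of width $n-m$ along the bottom. Since $u\geq v$ and $u(m-1)=v(n-1)$ force $m\leq n$, this band is well-defined, and since it is $\tau$-stable and consists of finitely many indecomposables, $\BX$ satisfies $\tau\BX=\BX$ and is automatically both precovering and preenveloping. Theorem \ref{thm:TX_triangulated} then makes $\sT_{\BX}$ triangulated, and the usual checks (finite-dimensional $\Hom$'s, split idempotents, algebraic origin inherited from $\sT$, standardness via Theorem \ref{thm:AR}(iii)) verify the hypotheses of Theorem \ref{thm:Amiot} for $\sT_{\BX}$.

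By Theorem \ref{thm:AR}(ii) the AR quiver of $\sT_{\BX}$ is the band obtained from \eqref{equ:D} by deleting the bottom $n-m$ rows, which has width $m$ and side length $v(n-1)+1 = u(m-1)+1$. As a plain quiver this coincides with the AR quiver of the $u$-cluster category of type $D_m$. The next step is to check that the AR translations agree; the deletion removes non-exceptional rows from the bottom, so the orbit structure of $\tau^{-1}$ on the exceptional vertices at the top of the band is inherited unchanged from $\sT$, and by the classification recalled before the theorem this structure is one long orbit precisely when $v,n$ are both odd.

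The main obstacle is precisely this orbit-matching on the exceptional vertices, which is what the parity hypothesis controls: when $u,m$ are both odd the target has one long exceptional orbit, so we need $\sT_{\BX}$ to present the same picture, which by the analysis above requires $v,n$ both odd, exactly the imposed assumption; in the complementary case both sides split into two shorter orbits and no extra condition is needed. Once this parity bookkeeping is done, the AR quivers of $\sT_{\BX}$ and of the $u$-cluster category of type $D_m$ are isomorphic as translation quivers, and Theorem \ref{thm:Amiot} with $\sU$ the $u$-cluster category of type $D_m$ and $\sV=\sT_{\BX}$ delivers the triangulated equivalence, completing the proof.
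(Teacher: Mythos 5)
Your construction is exactly the one the paper intends (the paper only sketches this proof: delete the band of $n-m$ bottom rows from the AR quiver \eqref{equ:D} of the $v$-cluster category of type $D_n$ and repeat the argument of Theorem \ref{thm:A}), and everything up to the parity bookkeeping is fine: $\BX$ is $\add$ of finitely many indecomposables, so precovering and preenveloping, $\tau\BX=\BX$, Theorem \ref{thm:TX_triangulated} and Theorem \ref{thm:AR} apply, and the quotient has a $D_m$-shaped AR quiver of side length $v(n-1)+1=u(m-1)+1$. The gap is in your final sentence. You claim that ``in the complementary case both sides split into two shorter orbits and no extra condition is needed''. The complementary case is only ``not both $u,m$ odd'', and the stated hypothesis is the single implication ($u,m$ both odd $\Rightarrow$ $v,n$ both odd); it does not exclude that $v,n$ are both odd while $u$ is even. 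In that situation $\tau^{-1}\Sigma^{v}$ does switch the exceptional vertices of $\BZ D_n$, so $\sT_{\BX}$ inherits one long exceptional $\tau$-orbit of length $2\bigl(v(n-1)+1\bigr)$, whereas the $u$-cluster category of type $D_m$ with $u$ even has two exceptional orbits of length $u(m-1)+1$; the translation quivers are then not isomorphic and Theorem \ref{thm:Amiot} cannot be invoked. A concrete instance satisfying every hypothesis you use is $u=2$, $m=5$, $v=1$, $n=9$.

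What your argument actually establishes is the theorem under the symmetric condition ``$u,m$ both odd $\iff$ $v,n$ both odd'', and you should either state this explicitly or explain why the remaining case is out of reach for this method (any deletion of rows producing a connected $D_m$-shaped quiver must keep the two exceptional rows, so the quotient always inherits their orbit structure from $\sT$). Note that the symmetric condition is exactly what the choices in Corollary \ref{cor:D} satisfy: for $u$ odd, $v=1$ and $n=u(m-1)+1$ give $n$ odd if and only if $m$ is odd, and for $u$ even, $v=2$ produces no switch on either side. So the intended applications are unaffected, but as written your ``no extra condition is needed'' glosses over the one genuinely delicate point of the proof rather than resolving it.
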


\begin{Corollary}
\label{cor:D}
Each $u$-cluster category of type $D$ is triangulated e\-qui\-va\-lent
to a quotient of a $1$- or a $2$-cluster category of type $D$.
\end{Corollary}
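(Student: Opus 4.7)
The plan is to mimic the proof of Corollary \ref{cor:A}, invoking Theorem \ref{thm:D} with $v \in \{1,2\}$ chosen according to the parity of $u$, and then verifying that the divisibility relation $u(m-1)=v(n-1)$ can be solved with an integer $n \geq 4$ satisfying the extra parity condition of the theorem.

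Concretely, I split into two cases. If $u$ is odd, I set $v=1$ and $n = u(m-1)+1$. Then $u(m-1)=v(n-1)$ holds by construction, and since $m \geq 4$ and $u \geq 1$ I have $n \geq m \geq 4$. If moreover $m$ is odd, then $m-1$ is even, so $n = u(m-1)+1$ is odd; together with $v=1$ being odd, this verifies the parity hypothesis of Theorem \ref{thm:D}. If $u$ is even, I set $v=2$ and $n = \tfrac{u}{2}(m-1)+1$, which again satisfies $u(m-1)=v(n-1)$ and $n \geq m \geq 4$; the parity condition is now vacuous because $u$ is even.

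In either case Theorem \ref{thm:D} applies and gives that the $u$-cluster category of type $D_m$ is triangulated equivalent to a quotient of the $v$-cluster category of type $D_n$, with $v \in \{1,2\}$, which is exactly the statement of the corollary. There is no real obstacle here; the only thing one has to be slightly careful about is that the parity sub-condition of Theorem \ref{thm:D} really is satisfied in the odd case, which is why one cannot simply always reduce to $v=1$ and must allow $v=2$ when $u$ is even, exactly as in Corollary \ref{cor:A}.
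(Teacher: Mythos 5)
Your proposal is correct and follows exactly the paper's argument: the paper likewise sets $v=1$, $n=u(m-1)+1$ when $u$ is odd and $v=2$, $n=\frac{u}{2}(m-1)+1$ when $u$ is even, and your additional check of the parity hypothesis of Theorem \ref{thm:D} (that $n$ is odd when $u,m$ are both odd) and of $n\geq 4$ is the routine verification the paper leaves implicit.
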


\begin{proof}
This follows from Theorem \ref{thm:D}: If $u$ is odd then set $v =
1$ and $n = u(m-1) + 1$, and if $u$ is even then set $v = 2$ and $n
= \frac{u}{2}(m-1) + 1$.
\end{proof}

The corollary shows that in type $D$, just as in type $A$, every
phe\-no\-me\-non of $v$-cluster categories is implicit in $1$- and
$2$-cluster categories.  Both $1$- and $2$-cluster categories are
again needed because of the different actions of the AR translation
on the exceptional vertices.

\setcounter{subsection}{4}
\subsection{Type E}

The AR quiver of $\D(kE_n)$ is $\BZ E_n$ by \cite[cor.\
4.5]{Happel}.  Let me use $E_6$ for illustrative purposes,
\[
  \def\labelstyle{\textstyle}
  \def\objectstyle{\scriptscriptstyle}
  \vcenter{
  \xymatrix @!0 @-0.6pc {
    *{} \ar@{-}[rrrrrrrrrrrrrrrrrr] & & & & & & *{} \ar[dr] & & *{} \ar[dr] & & *{} \ar[dr] & & *{} & & & & &&*{}\\
    & & & & & & & *{} \ar[dr] \ar[ur] & & *{} \ar[dr] \ar[ur] & & *{} \ar[dr] \ar[ur] & & & & &&&\\
    & & *{} \ar@{.}[rr] & & *{} & & *{} \ar[dr] \ar[ur] \ar[r] & *{} \ar[r] & *{} \ar[dr] \ar[ur] \ar[r] & *{} \ar[r] & *{} \ar[dr] \ar[ur] \ar[r] & *{} \ar[r] & *{} & &*{} \ar@{.}[rr] & & *{} &&\\
    & & & & & & & *{} \ar[dr] \ar[ur] & & *{} \ar[dr] \ar[ur] & & *{} \ar[dr] \ar[ur] & & & & &&&\\
    *{}\ar@{-}[rrrrrrrrrrrrrrrrrr] & & & & & & *{} \ar[ur] & & *{} \ar[ur] & & *{} \ar[ur] & & *{} & & & &&&*{}\\
                      }
          }.
\]
For both $E_6$, $E_7$, and $E_8$, the action of $\tau^{-1}$ on the AR
quiver is to shift one unit to the right.  For $E_6$, the action of
$\Sigma$ is to shift $6$ units to the right, and reflect in the
horizontal centre line.  For $E_7$, the action of $\Sigma$ is to shift
$9$ units to the right, and for $E_8$, the action is to shift $15$
units to the right.  See \cite[table p.\ 359]{MiyachiYekutieli}.
Hence the action of $\tau^{-1}\Sigma^v$ on the AR quiver is the
following: For $E_6$, it shifts $6v+1$ units to the right, and
reflects in the horizontal centre line if $v$ is odd.  For $E_7$, it
shifts $9v+1$ units to the right, and for $E_8$, it shifts $15v+1$
units to the right.

Just as in the previous sections, this permits me to compute the AR
quiver of $\D(kE_n)/\tau^{-1}\Sigma^v$, the $v$-cluster category of
type $E_n$.  For instance, in the case of $E_6$, the quiver is 
\[
  \def\labelstyle{\textstyle}
  \def\objectstyle{\scriptscriptstyle}
  \vcenter{
  \xymatrix @!0 @-0.6pc {
    *{} \ar@{-}[rrrrrrrrrrrrrrrrrr] & & & & & & *{} \ar[dr] & & *{} \ar[dr] & & *{} \ar[dr] & & *{} & & & & &&*{}\\
    & & & & & & & *{} \ar[dr] \ar[ur] & & *{} \ar[dr] \ar[ur] & & *{} \ar[dr] \ar[ur] & & & & &&&\\
    *{} \ar[uu] & & *{} \ar@{.}[rr] & & *{} & & *{} \ar[dr] \ar[ur] \ar[r] & *{} \ar[r] & *{} \ar[dr] \ar[ur] \ar[r] & *{} \ar[r] & *{} \ar[dr] \ar[ur] \ar[r] & *{} \ar[r] & *{} & &*{} \ar@{.}[rr] & & *{} &&*{}\ar[uu]\\
    & & & & & & & *{} \ar[dr] \ar[ur] & & *{} \ar[dr] \ar[ur] & & *{} \ar[dr] \ar[ur] & & & & &&&\\
    *{} \ar@{-}[rrrrrrrrrrrrrrrrrr] \ar[uu] & & & & & & *{} \ar[ur] & & *{} \ar[ur] & & *{} \ar[ur] & & *{} & & & &&&*{}\ar[uu]\\
    & & & & & & & & \lefteqn{\textstyle{6v+1}} & & & & & \\
                      }
          }
\]
if $v$ is even and
\begin{equation}
\label{equ:E6mobius}
  \def\labelstyle{\textstyle}
  \def\objectstyle{\scriptscriptstyle}
  \vcenter{
  \xymatrix @!0 @-0.6pc {
    *{} \ar@{-}[rrrrrrrrrrrrrrrrrr] & & & & & & *{} \ar[dr] & & *{} \ar[dr] & & *{} \ar[dr] & & *{} & & & & &&*{}\ar[dd]\\
    & & & & & & & *{} \ar[dr] \ar[ur] & & *{} \ar[dr] \ar[ur] & & *{} \ar[dr] \ar[ur] & & & & &&&\\
    *{} \ar[uu] & & *{} \ar@{.}[rr] & & *{} & & *{} \ar[dr] \ar[ur] \ar[r] & *{} \ar[r] & *{} \ar[dr] \ar[ur] \ar[r] & *{} \ar[r] & *{} \ar[dr] \ar[ur] \ar[r] & *{} \ar[r] & *{} & &*{} \ar@{.}[rr] & & *{} &&*{}\ar[dd]\\
    & & & & & & & *{} \ar[dr] \ar[ur] & & *{} \ar[dr] \ar[ur] & & *{} \ar[dr] \ar[ur] & & & & &&&\\
    *{} \ar@{-}[rrrrrrrrrrrrrrrrrr] \ar[uu] & & & & & & *{} \ar[ur] & & *{} \ar[ur] & & *{} \ar[ur] & & *{} & & & &&&*{}\\
    & & & & & & & & \lefteqn{\textstyle{6v+1}} & & & & & \\
                      }
          }
\end{equation}
if $v$ is odd.  In the cases of $E_7$ and $E_8$, the ends of the
rectangle always have the same orientations, and the horizontal
lengths are $9v+1$ and $15v+1$, respectively.

By deleting one or two rows of vertices from the AR quiver in type
$E_8$ or one row of vertices from the AR quiver in type $E_7$, I can
get down to types $E_6$ and $E_7$.  The resulting theorem is the
following.

\begin{Theorem}
\label{thm:E}
\begin{enumerate}
  
  \item Let $u$ and $v$ be positive integers with $3u = 5v$.  Then the
  $u$-cluster category of type $E_7$ is triangulated equivalent to a
  quotient category of the $v$-cluster category of type $E_8$.

\smallskip

  \item Let $u$ and $v$ be positive integers with $u$ even and $2u =
  5v$.  Then the $u$-cluster category of type $E_6$ is triangulated
  equivalent to a quotient category of the $v$-cluster category of
  type $E_8$.

\smallskip

  \item Let $u$ and $v$ be positive integers with $u$ even and $2u =
  3v$.  Then the $u$-cluster category of type $E_6$ is triangulated
  equivalent to a quotient category of the $v$-cluster category of
  type $E_7$.

\end{enumerate}
\end{Theorem}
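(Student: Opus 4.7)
The plan is to carry out, for each of the three parts, the same strategy used in the proofs of Theorems \ref{thm:A} and \ref{thm:D}. Let $\sT$ denote the $v$-cluster category of the larger type ($E_8$ for (i) and (ii), $E_7$ for (iii)); by the discussion preceding the statement, its AR quiver is a rectangle with vertical ends identified, of horizontal side length $15v+1$ in (i) and (ii) and $9v+1$ in (iii), whose vertical shape is that of $\BZ E_8$ or $\BZ E_7$. I then take $\BX$ to be the additive closure of the indecomposable objects lying in one or two rows along the top and/or bottom of this AR quiver, chosen so that deleting those rows produces exactly the vertical shape of $\BZ E_m$ (with $m=7$ in (i) and $m=6$ in (ii) and (iii)). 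Since the horizontal shift $\tau$ preserves these rows, we have $\tau\BX=\BX$, and because $\BX$ is the additive closure of finitely many objects it is automatically preenveloping and precovering.

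Theorem \ref{thm:TX_triangulated} then gives that $\sT_\BX$ is triangulated, and Theorem \ref{thm:AR}(ii) gives that its AR quiver is obtained by deleting the prescribed rows, so it is connected with finitely many indecomposables. The remaining hypotheses of Amiot's Theorem \ref{thm:Amiot} for $\sT_\BX$ are inherited from $\sT$ or routine: $k$-linearity and finite-dimensional $\Hom$ are clear, split idempotents is an easy check, algebraic origin passes to $\sT_\BX$, and standardness is Theorem \ref{thm:AR}(iii). It then remains to match the AR quiver of $\sT_\BX$, as a translation quiver, with the AR quiver of the $u$-cluster category of the smaller type $E_m$. The horizontal side lengths agree by the arithmetic assumptions: $3u=5v$ gives $9u+1=15v+1$ in (i), $2u=5v$ gives $6u+1=15v+1$ in (ii), and $2u=3v$ gives $6u+1=9v+1$ in (iii). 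Once the shapes agree, Theorem \ref{thm:Amiot} supplies the triangulated equivalence.

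The main obstacle will be twofold. First, one must verify that the row-deletion really reproduces the AR quiver of the target $u$-cluster category rather than some other translation quiver of the same width; this amounts to a careful local inspection near the deleted rows, and in parts (ii) and (iii) one must check that the exceptional vertex of $\BZ E_6$ appears in the correct position after the deletion. Second, one must track the twisting of the rectangle's vertical ends: the AR quiver of a $v$-cluster category of type $E_6$ is a M\"obius band when $v$ is odd (as in diagram~\eqref{equ:E6mobius}), whereas the $E_7$ and $E_8$ AR quivers are never twisted. In parts (ii) and (iii) the assumption that $u$ is even forces the target $E_6$ quiver to be untwisted, matching the untwisted rectangle produced from $\sT$; in part (i) no $E_6$ is involved, so this subtlety does not arise. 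Writing down the deleted rows explicitly in each of the three cases and comparing with the known shape of the target AR quiver will conclude the argument.
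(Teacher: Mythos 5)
Your proposal is correct and follows essentially the same route as the paper, which only sketches the argument: delete one or two $\tau$-stable rows (the long-arm end rows) from the AR quiver of the $v$-cluster category of type $E_8$ or $E_7$, invoke Theorems \ref{thm:TX_triangulated} and \ref{thm:AR} to get a triangulated, standard quotient with the deleted quiver, match horizontal lengths via $3u=5v$, $2u=5v$, $2u=3v$, and conclude with Theorem \ref{thm:Amiot}. Your handling of the parity issue (the evenness of $u$ ruling out the M\"obius band in the $E_6$ cases, with $E_7$ and $E_8$ rectangles never twisted) is exactly the point made in the paper's remark following the theorem.
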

In parts (ii) and (iii) of the theorem, $u$ is required to be even to
avoid that the AR quiver of the $u$-cluster category of type $E_6$ is
the M\"{o}bius band of figure \eqref{equ:E6mobius}.

\begin{Remark}
\label{rmk:E}
In contrast to the situation in types $A$ and $D$, Theorem
\ref{thm:E} does not provide for arbitrary $u$-cluster categories of
types $E_6$ and $E_7$ to be quotients of the $1$- and $2$-cluster
categories of type $E_8$.

Indeed, the method used in types $A$ and $D$ was to take a $1$- or a
$2$-cluster category of type $A_n$ or $D_n$ for some large $n$, then
trim its AR quiver to a smaller width.  The same idea cannot work in
type $E$, because the AR quivers of $v$-cluster categories of type $E$
have a fixed, small width.

It appears that in type $E$, phenomena of general $v$-cluster
categories are not implicit in the $1$- and $2$-cluster situations.
\end{Remark}

\setcounter{subsection}{25}
\subsection{Mixed type}

By employing different deletions of vertices, it is also possible to
go from type $D_n$ to type $A_m$ for $m < n$, and from types $E_6$,
$E_7$, $E_8$ to types $A_2$ through $A_7$ and types $D_4$ through
$D_7$.  The details of this are left to the reader.

\medskip
\noindent
{\em Acknowledgement. }  I would like to thank Claire Amiot for
answering several questions on her paper \cite{Amiot}.

\end{document}